\newcommand{\R}{\mathbb{R}}
\renewcommand{\epsilon}{\varepsilon}
\newtheoremstyle{mythmstyle}
	{\topsep}
	{\topsep}
	{\itshape}
	{}
	{\scshape}
	{.}
	{3pt}
	{}
\theoremstyle{mythmstyle}
\newtheorem{nn}{}[section]
\newtheorem{lemma}[nn]{Lemma}
\newtheorem{theorem}[nn]{Theorem}
\newtheorem{cor}[nn]{Corollary}
\newtheorem{definition}[nn]{Definition}
\newtheorem{conj}[nn]{Conjecture}
\newtheorem{REMARK}[nn]{Remark}
\numberwithin{equation}{section}
\title{On the Length of Monotone Paths in Polyhedra}
\author[1]{M. Blanchard}
\author[2]{J.A. De Loera}
\author[3]{Q. Louveaux}
\affil[1]{Massachusetts Institute of Technology, Cambridge, MA, USA}
\affil[2]{University of California, Davis, CA, USA}
\affil[3]{Université de Liège, Liège, Belgique}
\date{} 
\begin{document}



\maketitle

\begin{abstract}
    Motivated by the problem of bounding the number of iterations of the Simplex algorithm 
    we investigate the possible lengths of monotone paths followed by the Simplex 
    method inside the oriented graphs of polyhedra (oriented by the objective function). We consider both the \emph{shortest} 
    and the \emph{longest} monotone paths and estimate the \emph{monotone diameter} and \emph{height} of polyhedra. Our analysis applies to  transportation polytopes, matroid polytopes, matching polytopes, shortest-path polytopes, and the TSP, among others. 
    
    We begin by showing that combinatorial cubes have monotone and Bland pivot height bounded by their dimension and that in fact  all monotone paths of zonotopes are no larger than the number of edge directions of the zonotope. We later use this to show that several polytopes have polynomial-size pivot height, for \emph{all} pivot rules. 
 In contrast, we show that many well-known combinatorial polytopes have exponentially-long monotone paths. 
 Surprisingly, for some famous pivot rules,  e.g., greatest improvement and steepest edge, these same polytopes 
 have polynomial-size simplex paths. 
\end{abstract}

\section{Introduction}
It is a famous open challenge to find a pivot rule that can make the Simplex method run in polynomial time for all LPs or
show that none exist (see e.g., \cite{miketoddsurvey2002, mysurvey2011, adleretal2014} and the many references therein for a 
discussion of this famous algorithmic problem). In particular, such a pivot rule will take polynomially many monotonically-improving
edge steps from any initial vertex. This paper discusses the possible lengths of the paths followed by the Simplex method on several famous combinatorial polyhedra where computing monotone paths has nice combinatorial meaning.

%

In what follows we consider a polytope/polyhedron $P(A,b)$ in the canonical forms $\{x \in \R^n: Ax = b, x\geq 0\}$ or  $\{x \in \R^n: Ax \leq b,x\geq 0\}$. Here $A \in \R^{m\times n}$ and $b \in \R^m$.
Objective function vectors will be typically denoted by $c \in \R^n$. $LP(A,b,c)$ will denote the (minimization) LP 
instance given by $A, b, c$. 
Given any $A,b,c$ such that $c$ is a nondegenerate linear objective function i.e., no two vertices have the same objective function, 
one has a natural directed acyclic graph on the vertices and edges of the polytope $P(A,b)$ by orienting each edge of the polytope $P(A,b)$ 
as per the objective value of the two endpoints. This will be denoted by $G(A,b,c)$. This kind of orientations of the graphs of 
$P(A,b)$ are called \emph{LP-admissible}. We note that the directed graph $G(A,b,c)$ is always acyclic, with a unique 
sink and source in each face. While there is a characterization of the LP-admissible orientations for $3$-polytopes (see \cite{Klee-Mihalisin2000}), a similar result in higher dimensions seems unlikely (see \cite{Develin2004}).

We introduce now the main combinatorial definitions and then give several remarks about these concepts. See 
Figure \ref{klee-minty-example} for an example.


\begin{definition}
Let $c$ be a linear objective function and $\pi$ a pivot rule.
\begin{enumerate}
    \item A \emph{$c$-monotone path} is a directed path in the LP-admissible oriented graph $G(A,b,c)$, that starts from some vertex to the optimal vertex (note that we always consider the optimal vertex to be the terminal node of the path, but we do not start necessarily at a specific node). 
    
    \item From each vertex there is at least one shortest $c$-monotone path to the optimum. The \emph{$c$-monotone diameter} is the maximum length of a shortest $c$-monotone path, the maximum being taken over all starting vertices.
    
    \item The \emph{$c$-height} is the length of the longest $c$-monotone path. 
    
    \item A \emph{$c$-$\pi$-simplex path} is a $c$-monotone path in $G(A,b,c)$ following the pivot rule $\pi$. In this paper we will consider four popular pivot rules: Bland's pivot rule, Dantzig's pivot rule, greatest improvement pivot rule, and steepest edge pivot rule.
\end{enumerate}
\end{definition}

We use these definitions to build our main concepts of interest.

\begin{definition}
\begin{enumerate}
    \item The \emph{monotone diameter} of a polytope is the maximum  $c$-monotone diameter, the maximum being taken over all  objective functions $c$.
    \item The \emph{height} is the maximum $c$-height, the maximum being taken over all objective functions $c$.
    \item The \emph{$\pi$-pivot height} is the maximum length of a $c$-$\pi$-simplex path for the pivot rule $\pi$, the maximum being taken over all objective functions $c$.
\end{enumerate}
\end{definition}

\begin{figure}
    \centering
    \includegraphics[width=4cm]{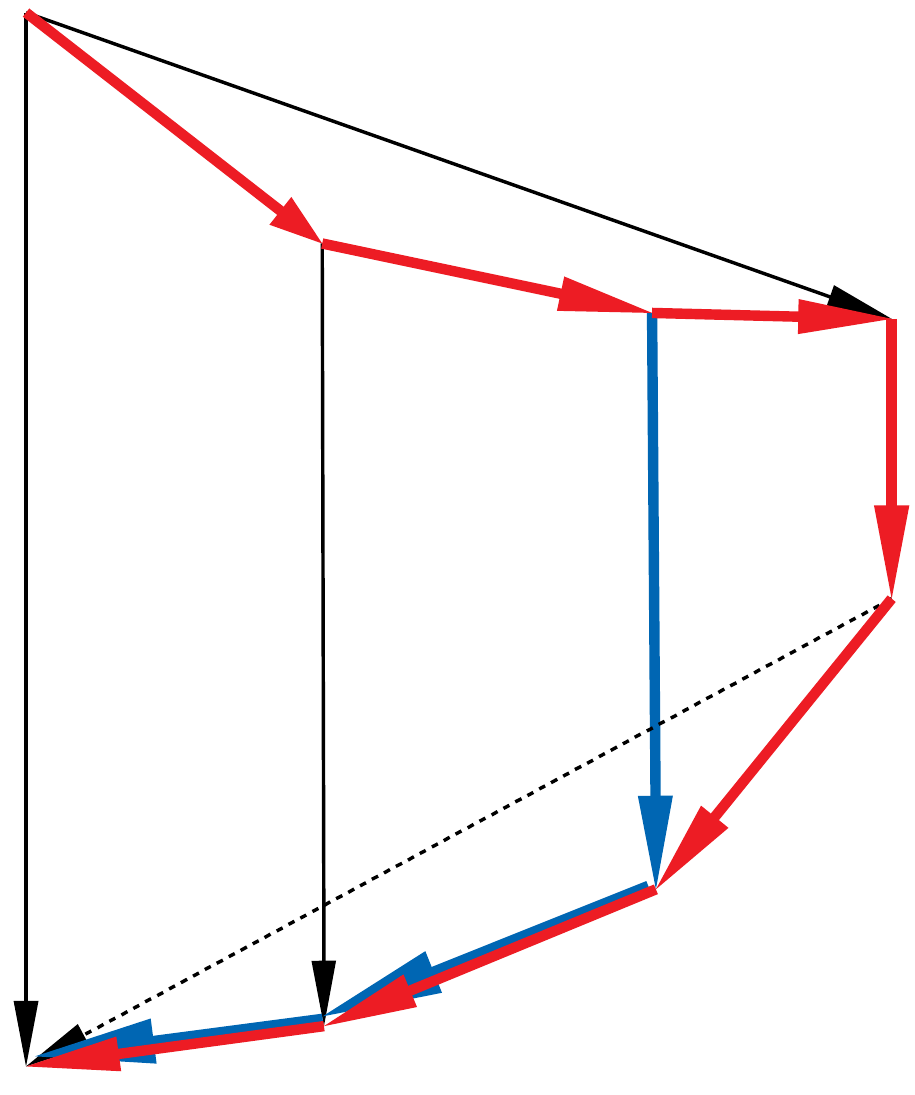}
    \caption{Two monotone paths on the directed graph $G(A,b,c)$ of the Klee$-$Minty cube. The longest monotone path 
    in red gives the \emph{height} and the blue monotone path gives the \emph{monotone diameter} of this polytope.}
\label{klee-minty-example}
\end{figure}

The study of the \emph{undirected} diameter of the graph of polytopes is of course classical and related to the Hirsch conjecture 
(see e.g., \cite{Paco-survey2013} and references), but the investigations of \emph{directed monotone} paths are even more
directly relevant to the Simplex method, and they have occupied researchers for some time:
In the 1960's Klee initiated the study of short/long monotone paths in his papers \cite{Klee1,Klee2} where he proved 
bounds on the monotone diameter and height of simple polytopes. Later in the 1980's, in a remarkable \emph{tour de force}, 
Todd \cite{monoHirschfalse} showed that the \emph{monotone} Hirsch conjecture, saying that the monotone diameter is always less or 
equal to the number of facets minus the dimension, is false.
 In the 1990's Kalai \cite{kalaiheight} proved that for a $n$-dimensional polyhedron with $m$ facets there is a subexponential bound  on the \emph{monotone} diameter of $m^{2\sqrt{m}}$. Today, several papers continue the study of shortest monotone paths (see \cite{2018-morris+gallagher} and references therein). 

It must be noted that there are several exponentially-long $c$-$\pi$-simplex paths, e.g., the Klee-Minty cubes \cite{kleeminty} 
and similar counterexamples for other pivot rules (see discussion and references in \cite{mysurvey2011}). 
The notion of height is useful to indicate the worst possible case of the Simplex method.  In fact, long monotone
paths have also been explored before, the \emph{monotone upper bound problem} asks for the maximal number $M(n, m)$ of 
vertices on a strictly increasing edge-path on a simple $n$-dimensional polytope with $m$ facets. This is the same as 
the largest height over all simple $n$-polytopes with $m$ facets. It was conjectured that 
$M(n,m)$ is never more than the number of vertices of a dual-to-cyclic $n$-polytope with $m$ facets, 
but proven to be strictly less than that in dimension six \cite{Julian+Guenter}. In our paper the reader can 
observe how the bound $M(n,m)$ is often too big for specific polytopes. In fact, Del Pia and Michini \cite{DelPia2019} recently proved that for lattice polyopes contained in $[0,k]^n$, there exists a pivot rule $\pi$ for which the $\pi$-pivot height is at most $O(n^6 k\log n)$.

We wish to stress that computational complexity influences the geometry of monotone paths of polytopes. In
\cite{adleretal2014} it was shown that there are Simplex pivoting rules for which it is PSPACE-complete to decide whether a 
particular basis will appear on the algorithm's path. This happens even for the Dantzig pivot rule \cite{fearnley+savani}. 
Moreover, it was recently shown in \cite{DKS-withcircuits} that it is hard to compute the monotone diameter. It is also difficult to decide when a monotone path is a simplex path but we focus here on four well-known pivot rules: Bland's pivot rule, 
Dantzig's pivot rule, greatest improvement pivot rule, and steepest edge pivot rule. 

Finally the concepts we discuss in this paper satisfy the following relation:
\begin{equation*}
    \text{\em (undirected) diameter} \leq \text{\em monotone diameter} \leq \pi \text{\em -pivot height} \leq \text{\em height}.
\end{equation*}

We now summarize our main results about these concepts.

\subsection*{Our results}

In Section \ref{cubes&zonotopes} we show that combinatorial cubes have monotone and Bland pivot height bounded by 
their dimension. Similarly, zonotopes have height never larger than the number of edge directions of the zonotope.

\begin{theorem}
\label{mono_diam_zonotope} Let $P$ be a convex polytope.
Denote by $Z(P)$ the zonotope generated by $E$, the minimal set of vectors containing all directions of edges of $P$.
$\text{mono-diam}(P) \leq \text{mono-diam}(Z(P)) = \text{number of different edge directions of }P$.
\end{theorem}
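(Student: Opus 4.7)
My plan is to prove the equality $\text{mono-diam}(Z(P))=k$ (where $k=|E|$) by exploiting the chamber/sign-vector combinatorics intrinsic to zonotopes, and then to derive the inequality $\text{mono-diam}(P)\le \text{mono-diam}(Z(P))$ via the two-dimensional shadow-vertex construction, in which the edge directions of $P$ naturally play the role of the zonotope's generators.

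For the equality, write $Z(P)=\sum_{i=1}^{k}[0,e_i]$ after translation and perturb $c$ so that $c\cdot e_i\ne 0$ for every $i$. Vertices of $Z(P)$ correspond to chambers of the arrangement $\{e_i^{\perp}\}$, parametrised by sign vectors $\sigma\in\{+,-\}^{k}$; an edge joining $v_\sigma$ and $v_{\sigma'}$ exists exactly when $\sigma,\sigma'$ differ in a single coordinate $j$, in which case the edge is parallel to $e_j$. Setting $\sigma^*:=(\operatorname{sign}(c\cdot e_i))_i$, a $c$-monotone step from $v_\sigma$ can only flip some $\sigma_j$ from the wrong sign to the right one, so the Hamming distance $|\sigma\triangle\sigma^*|$ drops by exactly one at each step. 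A standard chamber-connectivity argument (the straight line from a generic point in chamber $\sigma$ to a generic point in chamber $\sigma^*$ crosses an admissible wall first) shows that at least one such flip is always available when $\sigma\ne\sigma^*$, so every $c$-monotone path from $v_\sigma$ to the sink has length exactly $|\sigma\triangle\sigma^*|$; the maximum is $k$, attained at the $c$-source (whose sign vector is pointwise opposite to $\sigma^*$).

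For the inequality, given any vertex $v$ of $P$, any generic objective $c$, and the $c$-optimum $v^*$, choose $c_0$ in the relative interior of the normal cone of $v$ at $P$ and generic enough that the projection $\pi\colon x\mapsto(c_0\cdot x,c\cdot x)$ sends distinct elements of $E$ to pairwise non-parallel vectors of $\mathbb{R}^2$. The image $Q=\pi(P)$ is then a convex polygon in which $\pi(v)$ is the unique rightmost vertex and $\pi(v^*)$ is the unique topmost one. Walking counterclockwise along the upper-right arc of $\partial Q$ from $\pi(v)$ to $\pi(v^*)$, the classical shadow-vertex correspondence lifts the walk to an edge path in $P$ from $v$ to $v^*$ every step of which strictly increases $c$. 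Convexity of $Q$ forces the edges of this arc to have pairwise non-parallel 2D directions, and genericity of $\pi$ ensures that two edges of $P$ project to parallel segments iff they were parallel to begin with, i.e., lie in the same class of $E$; hence the arc contains at most $k$ edges, and the lifted monotone path has length at most $k$. The main technical point is to simultaneously satisfy the interior-of-normal-cone condition and the genericity requirement on $\pi$, but since the set of ``bad'' $c_0$'s is a finite union of proper subspaces of $\mathbb{R}^n$ while the normal cone of $v$ has non-empty relative interior, a small perturbation inside the normal cone yields a valid $c_0$.
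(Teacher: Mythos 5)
Your proof of the equality $\text{mono-diam}(Z(P))=k$ mirrors the paper's argument (writing zonotope vertices via sign vectors / subsets of generators and noting that each generator can be flipped at most once along a monotone path), but your proof of the inequality $\text{mono-diam}(P)\le k$ is genuinely different. The paper invokes the Gritzmann--Sturmfels result that the normal fan of $Z(E)$ refines the normal fan of $P$, and then ``coarsens'' any monotone path on the zonotope (viewed as a walk through chambers of the zonotope's normal fan) to a shorter monotone path on $P$. You instead run a shadow-vertex argument: project $P$ to a 2D polygon $Q$ via $x\mapsto(c_0\cdot x, c\cdot x)$ with $c_0$ chosen generically in the interior of the normal cone of the starting vertex, lift the upper-right boundary arc of $Q$ to a $c$-monotone edge path in $P$, and bound its length by noting that the edges of a convex arc all lie in a single open quadrant and hence have pairwise non-parallel 2D directions, while genericity guarantees that distinct direction classes in $E$ project to non-parallel lines, so each class appears at most once. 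The two approaches trade different ingredients: the paper's route reduces everything to a single black-box fact about zonotope normal fans and makes the relation ``mono-diam$(P)\le$ mono-diam$(Z(P))$'' conceptually transparent, whereas yours is a self-contained construction that explicitly produces the short monotone path and ties the bound to the classical shadow-vertex method. One caveat on your side: the ``genericity of $\pi$'' you need is a bit stronger than just ``distinct elements of $E$ project to non-parallel vectors'' --- you also need $\operatorname{span}(c,c_0)$ to be transversal to the normal fan of $P$ so that every edge of $Q$ lifts to a genuine edge of $P$; you acknowledge the issue in your last sentence, and the same ``finite union of proper subspaces'' argument covers it, but a fully rigorous writeup should state both conditions.
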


This simple theorem has nice consequences. We can easily show that matroid polytopes, polymatroid polytopes, and some types of transportation polytopes have  polynomial-size height and pivot height for \emph{all} pivot rules. 

\begin{cor}
\label{monodiam_matroid}
If $P$ is a matroid polytope or a polymatroid polytope, then $\text{mono-diam}(P)\leq {\binom{n}{2}}$, where $n$ is the number of
elements of the matroid.
\end{cor}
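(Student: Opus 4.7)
The plan is to apply Theorem \ref{mono_diam_zonotope} directly. That result reduces the task to bounding the number of distinct edge directions of the polytope $P$, so what remains is to identify the edge directions of matroid and polymatroid polytopes and count them.

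First I would recall the classical characterization of edges of matroid polytopes (due to Gel'fand–Goresky–MacPherson–Serganova, or equivalently a consequence of the basis exchange axiom): two vertices $\chi_B, \chi_{B'}$ (indicator vectors of bases $B, B'$) of the matroid polytope form an edge if and only if the symmetric difference $B \triangle B'$ has exactly two elements, say $\{i,j\}$. In that case $\chi_B - \chi_{B'} = \pm(e_i - e_j)$, so the edge direction lies along the line spanned by $e_i - e_j$. Hence every edge direction of a matroid polytope belongs to the set $\{\,e_i - e_j \st 1 \le i < j \le n\,\}$ (up to sign), which has cardinality $\binom{n}{2}$.

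For polymatroid (base) polytopes the same conclusion holds: an edge between two vertices is parallel to $e_i - e_j$ for some pair $i \neq j$. This can be seen by the standard greedy/exchange argument for the base polytope of a submodular function, or by the fact that the normal fan of the polymatroid polytope is a coarsening of the braid arrangement, so every edge is parallel to some root $e_i - e_j$. Consequently the set $E$ in Theorem \ref{mono_diam_zonotope} satisfies $|E| \le \binom{n}{2}$ in both cases, and applying that theorem yields $\text{mono-diam}(P) \le \binom{n}{2}$.

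The only real content is the edge-direction characterization; both parts are well known in the matroid/submodular optimization literature, so citing (or briefly invoking) the basis exchange property is the cleanest route. The main thing to be careful about is the polymatroid case, where vertices need not be 0/1 but the edge directions are still along root directions; one must not conflate edge length with edge direction when counting. Once this is handled, the corollary follows immediately from Theorem \ref{mono_diam_zonotope}.
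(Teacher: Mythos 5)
Your proposal follows exactly the same route as the paper: reduce to bounding the number of distinct edge directions and then invoke Theorem~\ref{mono_diam_zonotope}. The only difference is that the paper simply cites Topkis (Theorem~5.1 in \cite{topkis}) for the edge-direction count whereas you sketch the basis-exchange/GGMS argument directly, which is a perfectly sound substitute; as you did, one should be explicit that the polytopes in question are \emph{base} polytopes, since the independence polytope also has edges parallel to the coordinate directions $e_i$ and that would change the count.
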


This follows immediately from the bounds on the number of edge directions of a matroid or polymatroid polytope, see Theorem 5.1. in \cite{topkis}. Therefore, polytopes such as the permutahedron or the spanning tree polytope behave particularly well in the Simplex method as all pivot rules are efficient. This is also true in other cases.

\begin{theorem} \label{cor-TPsfixedk}
If $P$ is a $k\times n$ transportation polytope, 
$\text{mono-diam}(P) \leq e\cdot k!\, n^k$. Therefore, the monotone diameter of $k\times n$ transportation polytopes for fixed $k$ is polynomial in $n$.
\end{theorem}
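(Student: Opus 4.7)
The plan is to apply Theorem \ref{mono_diam_zonotope}, which reduces the problem to counting the distinct edge directions of the $k \times n$ transportation polytope $P$. Thus it suffices to prove that the number of such directions is at most $e \cdot k! \, n^k$.

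The first step is to recall the classical combinatorial description of edges in $P$: if we view feasible solutions as doubly-stochastic-like matrices and their supports as subgraphs of the bipartite graph $K_{k,n}$, then two vertices of $P$ are adjacent precisely when the symmetric difference of their supports is a single simple cycle, and in that case the edge direction is (a scalar multiple of) the $\pm 1$ alternating incidence vector of that cycle. Every edge direction of $P$ therefore corresponds to a simple even cycle in $K_{k,n}$, and such a cycle has length $2\ell$ for some $\ell$ with $2 \le \ell \le \min(k,n) = k$ (assuming without loss of generality that $k \le n$).

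The second step is to count the cycles of length $2\ell$ in $K_{k,n}$. One chooses $\ell$ rows ($\binom{k}{\ell}$ ways), $\ell$ columns ($\binom{n}{\ell}$ ways), and then an alternating cyclic arrangement on these $2\ell$ vertices. Fixing an arbitrary starting row to kill the cyclic rotation symmetry and dividing by $2$ to kill the reflection symmetry gives $\frac{\ell!(\ell-1)!}{2}$ distinct cycles on a given choice of vertices. Summing,
\[
\#\{\text{edge directions of }P\} \;\le\; \sum_{\ell=2}^{k} \binom{k}{\ell}\binom{n}{\ell}\frac{\ell!(\ell-1)!}{2}.
\]

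The third step is to bound this sum. Using $\binom{n}{\ell}\ell! = n!/(n-\ell)! \le n^\ell \le n^k$ and $\binom{k}{\ell}(\ell-1)! = k!/(\ell(k-\ell)!) \le k!/\ell$, each term is at most $\frac{k!\, n^k}{2\ell}$, and summing $\frac{1}{\ell}$ from $\ell=2$ to $k$ gives a logarithmic factor that is comfortably absorbed into the constant $e$, yielding the bound $e \cdot k!\, n^k$. For fixed $k$ this is polynomial in $n$, which gives the polynomial monotone diameter claim. The main obstacle I anticipate is the first step, namely pinning down the edge/cycle correspondence cleanly in the possibly degenerate case; once that is in hand, the rest is a bookkeeping exercise.
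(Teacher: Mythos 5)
Your overall strategy matches the paper's: reduce to counting edge directions via Theorem~\ref{mono_diam_zonotope}, identify each edge direction with a signed alternating cycle of length $2\ell$ in $K_{k,n}$ ($2\le\ell\le k$), count those cycles, and sum. Your count $\binom{k}{\ell}\binom{n}{\ell}\frac{\ell!(\ell-1)!}{2}$ is the correct number of simple cycles on a chosen vertex set (the paper's formula is twice this; either works as an upper bound).

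The gap is in the final estimate. You bound each term by $\frac{k!\,n^k}{2\ell}$ and then sum $\frac{1}{\ell}$, producing a factor on the order of $\ln k$. That factor is \emph{not} a constant absorbable into $e$: the resulting bound $\approx\frac{1}{2}k!\,n^k\ln k$ exceeds $e\cdot k!\,n^k$ once $k$ is in the low hundreds, so the explicit constant in the theorem is not established (the ``polynomial in $n$ for fixed $k$'' consequence survives, but the stated inequality does not). The loss comes from simultaneously applying $\binom{n}{\ell}\ell!\le n^\ell\le n^k$ and $\binom{k}{\ell}(\ell-1)!\le k!/\ell$, which discards the $(k-\ell)!$ in the denominator. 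Keep it: each term is at most $\frac{k!\,n^\ell}{2(k-\ell)!}$, and the substitution $j=k-\ell$ gives
\[
\sum_{\ell=2}^{k}\frac{k!\,n^\ell}{2(k-\ell)!}
=\frac{k!\,n^k}{2}\sum_{j=0}^{k-2}\frac{1}{j!\,n^j}
\le \frac{k!\,n^k}{2}\,e^{1/n}
\le e\cdot k!\,n^k,
\]
which is precisely the paper's estimate and delivers the claimed constant.
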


In Section \ref{01polyhedra} we show that many well-known combinatorial polytopes have exponentially-long monotone paths, and thus exponential height.

\begin{theorem}
\label{long_path_matching}
The height of the matching, perfect matching, fractional matching and fractional perfect matching polytopes on the complete graph $K_n$ is $>C \cdot \lfloor \frac{n}{2}-1 \rfloor!$ for a universal constant $C>0$.
\end{theorem}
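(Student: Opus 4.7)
The plan is to reduce to $n=2m$ even (the odd case is either vacuous for the perfect matching polytope or follows by exposing the matching polytope of $K_{n-1}$ as a face of that of $K_n$; note $\lfloor n/2-1\rfloor$ is unchanged when passing from $n$ to $n-1$). I will construct an objective $c$ and a monotone path of length $\Omega((2m-1)!!)$ in the perfect matching polytope of $K_{2m}$ by induction on $m$, then propagate the path to the other three polytopes.

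For the inductive step, let $h(m)$ denote the length of the longest monotone path in the perfect matching polytope of $K_{2m}$. For each $i\in\{1,\ldots,2m-1\}$, the face $F_i=\{x:x_{(2m,i)}=1\}$ is isomorphic to the perfect matching polytope of $K_{2m-2}$ on the remaining vertices, hence by induction contains a monotone path of length at least $h(m-1)$. I choose a global objective whose dominant term is $N\cdot i$ on the edge $(2m,i)$ for large $N$, so that the $2m-1$ faces are separated by $c$-value, while the remaining coordinates restrict to a suitable objective on each $F_i$ producing the desired subpath. Consecutive faces are linked by a 4-cycle swap: if the subpath in $F_i$ ends at a matching $M$ containing edges $(2m,i)$ and $(i+1,k)$, then $M':=M\setminus\{(2m,i),(i+1,k)\}\cup\{(2m,i+1),(i,k)\}$ has $M\triangle M'$ a single 4-cycle, so $M$ and $M'$ are adjacent in the polytope, and the dominant term guarantees $c(M')>c(M)$. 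Unrolling $h(m)\geq (2m-1)\,h(m-1)+(2m-2)$ from $h(2)=2$ yields $h(m)=\Omega((2m-1)!!)\geq C\,(m-1)!$.

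To propagate the path, observe first that the perfect matching polytope is a face of the matching polytope (set every degree constraint to equality), so the monotone path lifts by augmenting $c$ with a penalty enforcing this face. For the fractional perfect matching polytope, the key fact is that a 4-cycle swap between two perfect matchings $M_1,M_2$ remains a polytope edge: the objective $c_e=0$ for $e\in M_1\cup M_2$ and $c_e=-1$ otherwise is uniquely maximized on $[M_1,M_2]$, since (i) no integer perfect matching other than $M_1,M_2$ is supported on $M_1\cup M_2$ (a matching plus a 4-cycle admits only these two perfect sub-matchings), and (ii) every half-integer vertex of the fractional perfect matching polytope contains an odd cycle in its support, which cannot fit inside a matching-plus-4-cycle. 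Since the inductive path uses only 4-cycle swaps, it lifts to the fractional perfect matching polytope, and hence to the fractional matching polytope (of which the former is a face).

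The main obstacle will be showing that one global objective $c$ restricts simultaneously to a valid inductive objective on all $2m-1$ faces $F_i$. This requires either strengthening the inductive hypothesis so that the starting vertex of the subpath inside each face can be prescribed (to ensure compatibility with the incoming 4-cycle-swap transition), or exploiting a symmetric choice of $c$ that restricts favorably to every $F_i$. Once this alignment is achieved, the inductive step goes through as sketched, and the four polytopes inherit monotone paths of the required length.
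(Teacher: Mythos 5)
You take a genuinely different route from the paper. The paper reduces everything to Pak's Theorem~1.4 on the Birkhoff polytope: it exhibits the Birkhoff polytope on roughly $\lfloor n/2\rfloor$ vertices per side as a \emph{face} of each of the four target polytopes, by setting the non-bipartite variables to $0$ and, where relevant, the degree inequalities to equality (and, for the fractional perfect matching polytope with $n$ odd, restricting first to the triangle sub-face $x_{n-2,n-1}=x_{n-1,n}=x_{n-2,n}=\tfrac12$). A long monotone path in a face is automatically a long monotone path in the whole polytope, so Pak's $> C\,m!$ decreasing sequence in the Birkhoff polytope of $K_{m,m}$ transfers immediately. You instead propose to rebuild a Pak-type long path from scratch by an induction over the star faces $F_i=\{x : x_{(2m,i)}=1\}$, which would make the argument self-contained and yield the slightly stronger estimate $\Omega((2m-1)!!)$. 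Your lifting step to the fractional perfect matching polytope is a nice self-contained observation: the face cut out by $x_e=0$ for $e\notin M_1\cup M_2$ has exactly $M_1,M_2$ as its vertices, since any half-integral vertex must support an odd cycle and none can fit inside a matching plus a single $4$-cycle, so every $4$-cycle swap that is an edge of $PM$ is also an edge of $FPM$.

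However, the heart of your induction has a genuine gap, which you flag but do not resolve. You need a single objective $c$ that, when restricted to \emph{each} of the $2m-1$ faces $F_i$ (which live on pairwise different $(2m-2)$-vertex sets), produces a length-$h(m-1)$ monotone subpath; moreover the subpath inside $F_i$ must terminate at a matching containing the particular edge $(i+1,k)$ required for your transition, and the $4$-cycle swap must land exactly at the prescribed start of the subpath in $F_{i+1}$. None of these compatibility conditions comes for free: a symmetric $c$ would have to be carefully engineered, and the inductive hypothesis strengthened to prescribe \emph{both} endpoints of each subpath. That is precisely where all the work in a direct construction lies (it is what Pak's proof actually carries out for the Birkhoff polytope); until it is done, the recurrence $h(m)\geq(2m-1)h(m-1)+(2m-2)$ is not established. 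There is also a smaller hole for odd $n$: passing to $K_{n-1}$ as a face handles the matching and fractional matching polytopes, but not the fractional perfect matching polytope of $K_n$, which for odd $n$ has no $0/1$ vertex at all; the paper's triangle restriction is exactly the fix for this case.
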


\begin{theorem}
\label{long_path_tsp}
There exist monotone paths of length $>C \cdot \phi^n$ on the perfect 2-matching polytope and on the TSP with $n$ nodes for a universal constant $C>0$ and $\phi=\frac{1+\sqrt 5}{2}$ the golden ratio.
\end{theorem}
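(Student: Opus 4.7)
The plan is to give an explicit recursive construction producing, for each $n$, a sequence $F_1, \ldots, F_{L_n}$ of perfect $2$-matchings (respectively Hamiltonian tours) on $K_n$ such that each consecutive pair $F_i, F_{i+1}$ is adjacent in the perfect $2$-matching polytope (respectively the TSP polytope), with $L_n = \Omega(\phi^n)$, together with a single linear objective $c$ that strictly increases along the sequence.

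I first treat the perfect $2$-matching polytope, where adjacency has a clean combinatorial characterization via the symmetric difference $F \triangle F'$. The length lower bound will come from a Fibonacci-style recurrence. Given long sequences $\sigma_{n-1}$ on $K_{n-1}$ and $\sigma_{n-2}$ on $K_{n-2}$, a longer sequence on $K_n$ is produced by (i) inserting the new vertex $n$ via a fixed pair of edges into every $2$-matching of $\sigma_{n-1}$, (ii) appending a block obtained from $\sigma_{n-2}$ with vertices $n-1, n$ dedicated to a short fixed cycle, and (iii) bridging the two blocks by a single exchange step. Each consecutive symmetric difference is a single alternating circuit by construction, yielding $L_n \geq L_{n-1} + L_{n-2}$ and hence $L_n \geq C\phi^n$ for a universal constant $C$.

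For the TSP polytope the outline is the same, but every $F_i$ must be Hamiltonian and every $(F_i, F_{i+1})$ must be an edge of the TSP polytope. I restrict the gluing operations to tour-preserving moves such as reinserting vertex $n$ along an edge of a smaller tour or a local subtour-reversal swap that keeps the graph Hamiltonian. TSP-polytope adjacency is verified directly by bounding the small set of candidate Hamiltonian tours whose characteristic vectors could lie in the relative interior of the segment $[F_i, F_{i+1}]$ and ruling them out case-by-case, exploiting that $F_i \triangle F_{i+1}$ is forced to be small and structured. A monotonizing objective $c$ is produced by assigning edge weights hierarchically according to the recursion depth; since the constructed sequence consists of distinct vertices of the polytope, a suitable perturbation of $c$ will be strictly increasing along the entire path.

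The hardest step will be verifying TSP-polytope adjacency in the recursive construction: unlike in the $2$-matching case, adjacency here is not characterized by the symmetric difference alone and is generally hard to certify. The strategy is to keep each symmetric difference $F_i \triangle F_{i+1}$ small and highly structured, so that only finitely many candidate intermediate tours need to be enumerated and excluded by direct inspection; a secondary delicacy is propagating a single monotone objective through the two-level recursion, handled by weighting edges lexicographically by the recursion depth of their ``ownership'' vertices.
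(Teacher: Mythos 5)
Your high-level strategy — an explicit recursive construction giving a Fibonacci-type recurrence $L_n \gtrsim L_{n-1}+L_{n-2}$ and hence $L_n = \Omega(\phi^n)$ — matches the spirit of the paper's proof, and the P2M half of the plan is plausible. However, there are two genuine gaps that would make your version substantially harder or would fail as stated.

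First, and most importantly, you treat TSP adjacency as a separate, hard verification and propose to certify it by enumerating and excluding the candidate intermediate tours for each step. You correctly flag this as the hardest part, but the paper avoids it entirely: it invokes the fact (from Rispoli and Cosares) that any two Hamiltonian tours that are adjacent in the perfect $2$-matching polytope are automatically adjacent in the TSP polytope. Because of this, the paper builds a \emph{single} monotone path through $n$-tours only, certifies adjacency at every step using only the alternating-cycle criterion for P2M, and gets the TSP statement for free. Your plan, by contrast, builds two separate constructions (one through general $2$-matchings for P2M, one through tours for TSP) and has to do new adjacency work in the TSP case; the ``enumerate candidate intermediate tours'' step is not something you can realistically carry out uniformly inside a recursion without a structural lemma of the Rispoli--Cosares type. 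Restricting the P2M construction to tours from the outset both unifies the two statements and removes the hard step.

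Second, the monotonizing objective is not handled correctly. You propose to pick $c$ after the path is built and argue that ``since the constructed sequence consists of distinct vertices, a suitable perturbation of $c$ will be strictly increasing along the entire path.'' Distinctness of the visited vertices does not imply the existence of a linear functional that is strictly increasing along the path: one needs the step directions $v_{i+1}-v_i$ to admit a common strictly positive linear functional, i.e.\ the origin must lie outside their convex hull, and this must be \emph{verified}, not assumed. The paper handles this by fixing $c$ in advance as a lexicographic-by-edge objective $\psi = x_{1,2}+\alpha x_{1,3}+\cdots$ with $\alpha$ small, and then checking explicitly at each move that the lexicographically earliest edge that changes is dropped (so the step is strictly improving). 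Your ``hierarchical weights by recursion depth'' idea is a reasonable starting point, but you would still need to verify, step by step, that each constructed move decreases this objective; in particular the ``bridging'' steps between the $\sigma_{n-1}$ block and the $\sigma_{n-2}$ block are the delicate ones and are left unspecified in your outline. Without that verification the claim of monotonicity is not established.

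A smaller point: the way you form the recursion (attach vertex $n$ with a fixed pair of edges to every $2$-matching in $\sigma_{n-1}$, then append a block built from $\sigma_{n-2}$, then bridge) is plausible but you never check that the terminal $2$-matching of the first block and the initial one of the second block are actually adjacent (or reachable by a bounded number of improving steps). The paper's Step~2 is exactly the explicit alternating-cycle computation needed to make such a bridge rigorous, and it costs $+2$ rather than $+1$, which is why the paper's final recurrence is $L_n \ge L_{n-1}+2+L_{n-3}+2+\cdots$ rather than a clean $L_n \ge L_{n-1}+L_{n-2}$; the asymptotics are the same, but the accounting has to be done.
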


\begin{theorem}
\label{long_path_shortestpathpolytope}
There exist monotone paths of length $>\frac{C}{n^2}\sqrt[3]{n!}$ on the shortest path polytope on the complete graph $K_n$ for some universal constant $C>0$.
\end{theorem}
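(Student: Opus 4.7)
The plan is to give an explicit recursive construction. Let $L(n)$ denote the length of a monotone path I will build on the shortest-path polytope of $K_n$, for a carefully chosen source $s$, sink $t$, and cost vector $c$. The target is a recursion of the shape $L(n) \geq (n-O(1))\cdot L(n-3)/O(1)$, which by induction and Stirling's formula unwinds to $L(n)=\Omega(\sqrt[3]{n!}/n^2)$, matching the bound in the statement.

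The key combinatorial tool is an adjacency criterion for the shortest-path polytope: two simple $s$-$t$ paths $P, P'$ are adjacent vertices if and only if their symmetric difference (as edge sets) is a single simple cycle. This is standard and follows from the fact that $\chi(P)-\chi(P')$ decomposes into cycle directions of the node--arc incidence matrix; a single cycle yields a one-dimensional face, while multiple cycles produce intermediate integer vertices. The workhorse adjacency I will exploit is a \emph{detour swap}: replacing an edge $u\to v$ inside $P$ by a two-edge detour $u\to w\to v$ through an unused vertex $w$ produces a new simple $s$-$t$ path whose symmetric difference with $P$ is the triangle $\{uv, uw, wv\}$. Likewise, sliding such a detour from one edge of the current path to a neighboring edge has symmetric difference a single $4$-cycle, and is again an adjacency.

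The inductive step starts from a monotone sequence $(P_1,\ldots,P_{L(n-3)})$ on the shortest-path polytope of $K_{n-3}$ between $s$ and $t$, and introduces three new vertices $\{a,b,c\}$. These will form a \emph{choice gadget}: a sequence of $\Theta(n)$ adjacent detour insertions and slides using $a,b,c$, parameterized by position along the current path, producing $\Theta(n)$ sub-vertices per outer step. The full sequence on $K_n$ is assembled lexicographically: for every outer edge $P_i\to P_{i+1}$, I first sweep through the entire gadget, and only then trigger the outer step. Edge weights are chosen at well-separated magnitudes for the outer instance and for the gadget (a Klee--Minty style scaling), so that at each step the relevant scale dominates and the entire sequence is strictly monotone in the cost.

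The main obstacle will be verifying the outer-to-outer transitions. Within a fixed outer state the gadget transitions are detour swaps or slides and are manifestly edges of the polytope. But switching from a gadget-terminal configuration to the next outer configuration must itself be realized as a single edge of the shortest-path polytope on $K_n$, not merely of the sub-structure associated with $K_{n-3}$. This forces the gadget to end in a compatible reset state just before each outer transition, so that the symmetric difference across the transition is still a single cycle. Designing the gadget so that this reset is automatic, and checking non-degeneracy of the scaled cost function at each recursion level, is the heart of the argument. The polynomial $1/n^2$ overhead in the final bound absorbs the cost of the gadget resets together with the Stirling corrections accumulated over the $\Theta(n/3)$ recursive levels.
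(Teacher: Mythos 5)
Your plan has the right target recurrence $L(n)\gtrsim (n-O(1))\,L(n-3)$, and that is indeed what the paper proves, but as written this is a strategy, not a proof: the hard part you flag yourself — making each gadget-to-outer transition a genuine edge of the shortest-path polytope on $K_n$, and verifying that a single scaled cost orders the entire interleaved sequence — is exactly where a construction of this kind lives or dies, and you have not exhibited the gadget, the reset states, or the cost function that would settle it. The paper does something more hands-on and avoids a separate gadget altogether. It fixes the lexicographic cost on edges $\{1,2\}<\{1,3\}<\dots<\{n-1,n\}$ (realized by taking $\alpha$ small enough), starts from the path $1,2,\ldots,n$, and builds the recursion directly on the original vertex set: first recurse inside the facet $x_{1,2}=1$ (giving $L_{n-1}$ steps ending at $1,2,n$), then step to $1,3,4,\dots,n$ and recurse inside the facet $x_{1,3}=1$ (giving $L_{n-2}$ steps ending at $1,3,n$), and then, for each $k=3,\dots,n-2$, move from $1,k,n$ to $1,k+1,2,3,\ldots,k-1,k+2,\ldots,n$ and recurse inside the facet $x_{1,k+1}=x_{k+1,2}=1$ on a copy of $K_{n-3}$ (giving $L_{n-3}+2$ steps), arriving finally at the optimum $1,n$. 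This yields $L_n = L_{n-1}+L_{n-2}+(n-4)L_{n-3}+2(n-3)\ge (n-2)L_{n-3}$ with every intermediate move an explicit polytope edge under the already-fixed lexicographic objective, so there is no interleaving or rescaling to reconcile.

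One further caution: you state the adjacency criterion as ``symmetric difference of the two paths is a single simple cycle,'' whereas the paper (citing Rispoli) uses ``the union of the two paths contains a unique cycle.'' These are not obviously equivalent for $s$--$t$ paths in general, and for the shortest-path polytope the union criterion is the one that is correct and is used in the paper. Your basic detour swap $u\to v \ \leadsto\ u\to w\to v$ does satisfy both versions, but if your gadget slides ever produce two paths whose union has more than one cycle, the step would not be a polytope edge even if the symmetric difference looks like a single cycle. Any completed version of your argument should be checked against the union criterion, and in fact it is simplest to imitate the paper: design the sequence so that consecutive vertices differ by exactly one contraction/expansion of a sub-path, which automatically gives a unique cycle in the union.
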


In contrast, we prove that, for at least one of three famous pivot rules, Bland's, greatest improvement and steepest edge, they have polynomial-size pivot height.  Our discussion includes matching polytopes, fractional matching polytopes, shortest-path polytopes, and the TSP.

\begin{theorem}
\label{pivot_diameter_bland}
The number of vertices visited, by Dantzig or greatest improvement pivot rules paths, is bounded by
\begin{enumerate}[a)]
\item $m\left[ n\log (2n) \right]$  for the fractional perfect matching polytope ($FPM$) on a graph with $n$ nodes and $m$ edges. For the complete graph $K_n$, we get a bound $\sim \frac{n^3}{2}\log n$.
\item $m\left[ 2n\log (2n) \right]$  for the fractional matching polytope ($FM$) on a graph with $n$ nodes and $m$ edges. For the complete graph $K_n$, we get a bound $\sim n^3\log n$.
\item $n^2\left[ n\log (2n-1) \right] \sim n^3\log n$ for the Birkhoff polytope on the bipartite graph $K_{n,n}$.
\item $(n^2-2n+1)\left[(n-1)\log (n-1) \right] \sim n^3\log n$ for the shortest path polytope on $n$ nodes.
\end{enumerate}
\end{theorem}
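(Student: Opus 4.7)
The plan is to apply a Kitahara--Mizuno style bound on the number of Simplex iterations. For an LP in standard form $\min\{c^\top x : Ax = b,\, x \geq 0\}$ with $A \in \R^{m \times n}$, Kitahara and Mizuno proved that Dantzig's rule visits at most $n \lceil m (\gamma/\delta) \log(n \gamma/\delta) \rceil$ distinct bases, where $\gamma$ and $\delta$ are respectively the maximum and minimum positive value attained by any basic coordinate across all vertices. Essentially the same analysis transfers to the greatest-improvement rule because each greatest-improvement step decreases the objective by at least as much as a Dantzig step would, so the same geometric-decrease potential argument applies. Thus, for each of the four polytopes in the statement, it suffices to identify the number of variables, the number of equality constraints, and the ratio $\gamma/\delta$, and then substitute into this formula.

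For parts (a) and (b), the key input is the classical half-integrality result stating that every vertex of the fractional (perfect) matching polytope has coordinates in $\{0, 1/2, 1\}$, which gives $\gamma/\delta = 2$. Casting $FPM$ as a standard-form LP with $m$ edge variables and $n$ degree equalities, Kitahara--Mizuno yields the bound $m \lceil n \log(2n) \rceil$; for $FM$ the degree constraints are inequalities, and introducing $n$ slacks replaces $n$ by $2n$ inside the logarithmic factor, producing $m \lceil 2n \log(2n) \rceil$. For part (c), vertices of the Birkhoff polytope are permutation matrices, so every nonzero coordinate equals $1$ and $\gamma/\delta = 1$; with $n^2$ variables and $2n-1$ independent degree equalities one obtains $n^2 \lceil n \log(2n-1) \rceil$. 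For part (d), vertices of the shortest-path polytope are $0/1$ incidence vectors of $s$--$t$ paths, so again $\gamma/\delta = 1$; in the flow formulation the number of arc variables is $(n-1)^2$ and the number of flow-conservation equalities is $n-1$, yielding $(n^2 - 2n + 1)\lceil (n-1) \log(n-1) \rceil$.

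The main obstacle is twofold. First, each polytope must be reduced to a standard-form LP in which the Kitahara--Mizuno hypotheses apply \emph{without} worsening $\gamma/\delta$: for $FM$ and the shortest-path polytope the original descriptions contain inequality constraints, so slack variables must be introduced carefully so as to preserve the half-integrality (respectively, the $0/1$ structure) of all basic feasible solutions of the enlarged LP. Second, while the transfer of the bound from Dantzig's rule to greatest improvement is intuitively clear from monotone objective decrease, the Kitahara--Mizuno potential argument actually tracks individual basic coordinates, so one must verify that the sequence of vertices produced by greatest improvement inherits the same per-step geometric contraction of the objective gap, and hence the same entering-count bound for each variable.
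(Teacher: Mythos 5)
Your high-level plan --- apply a Kitahara--Mizuno--type iteration bound to each polytope after identifying $\gamma/\delta$ and the LP dimensions --- is the right one, and your observation that the analysis transfers from Dantzig's rule to greatest improvement because the per-step objective decrease is at least as large is exactly how the paper handles both rules simultaneously. However, there is a concrete gap: the bound you cite, $n\left[\frac{m\gamma}{\delta}\log\bigl(m\frac{\gamma}{\delta}\bigr)\right]$ (note: the argument of the logarithm should be $m\gamma/\delta$, not $n\gamma/\delta$), does \emph{not} reproduce the constants in the theorem. For the fractional perfect matching polytope the LP has $n$ rows and $\gamma/\delta = 2$, so $m\gamma/\delta = 2n$ and the unrefined bound is $m\left[2n\log(2n)\right]$, a factor of two larger than the claimed $m\left[n\log(2n)\right]$. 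Similarly for the Birkhoff polytope, with $2n-1$ rows and $\gamma/\delta=1$, you would get $n^2\left[(2n-1)\log(2n-1)\right]$, not $n^2\left[n\log(2n-1)\right]$; the formula you write down cannot produce the $n$ in front of the logarithm, and your substitution there is not justified by the bound you stated.

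The missing ingredient is a refinement of Lemma~1 in Kitahara--Mizuno: in bounding $z^* \geq c^Tx^{(t)} - \Delta^{(t)}\|x^*_{N^{(t)}}\|_1$, one should not coarsely replace $\|x^*_{N^{(t)}}\|_1$ by the generic estimate $m\gamma$, but instead use the exact value $\|x^*\|_1$ for the specific polytope at hand. This yields the sharper iteration bound $n\left[\frac{\|x^*\|_1}{\delta}\log\bigl(m\frac{\gamma}{\delta}\bigr)\right]$, which is what the paper applies. For FPM, every vertex satisfies $\|x\|_1 = |V|/2$, so $\|x^*\|_1/\delta = n$, not $2n$; for the Birkhoff polytope, $\|x^*\|_1 = n$, not $2n-1$; this is where the missing factor of two comes from in parts (a) and (c). (For parts (b) and (d) the two quantities happen to coincide, which is why your numbers there look right.) You also need to be careful about the variable count in part (b): with slack variables the LP has $m+n$ columns, not $m$, and matching the stated $m\left[2n\log(2n)\right]$ requires keeping track of which variables the potential-function argument is actually charging against. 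So the route is essentially the right one, but without the $\|x^*\|_1$-refinement your derivation proves strictly weaker bounds than those claimed.
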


\begin{theorem}
\label{pivot_diameter_steepest}
For the steepest-edge pivot rule paths, the number of visited vertices is bounded by
\begin{enumerate}[a)]
\item $m\left[ 2n\sqrt {n}\log (2n) \right]$  for the fractional perfect matching polytope ($FPM$) on a graph with $n$ nodes and $m$ edges. For the complete graph $K_n$, we get a bound $\sim n^3\sqrt{n} \log n$.
\item $m\left[ 4n\sqrt {2n}\log (2n) \right]$  for the fractional matching polytope ($FM$) on a graph with $n$ nodes and $m$ edges. For the complete graph $K_n$, we get a bound $\sim 2n^3\sqrt{2n} \log n$.
\item $n^2\left[ n\sqrt{\frac{n}{2}}\log (2n-1) \right] \sim n^3\sqrt{\frac{n}{2}}\log n$ for the Birkhoff polytope.
\item $(n^2-2n+1)\left[(n-1)\sqrt{\frac{2n}{3}}\log (n-1) \right] \sim n^3\sqrt {\frac{2n}{3}}\log n$ for the shortest path polytope.
\end{enumerate}
\end{theorem}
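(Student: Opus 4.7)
The plan is to adapt the proof of Theorem \ref{pivot_diameter_bland} to the steepest-edge rule, paying an additional multiplicative factor equal to the maximum Euclidean norm of an edge direction of the polytope in question. The skeleton of the argument is unchanged: each polytope family admits only a small number $N$ of distinct edge directions (coming from alternating cycles, bipartite cycles, or differences of shortest $s$--$t$ paths), and a Kitahara--Mizuno-style potential argument bounds how often the same direction can be repeatedly selected.

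The only change relative to the greatest-improvement analysis is the per-iteration improvement estimate. Greatest improvement picks the edge direction $d$ maximizing $c\cdot d$, yielding $c\cdot d \geq (c\cdot(x^*-x_k))/N$ at every step. Steepest edge instead picks $d$ maximizing $(c\cdot d)/\|d\|_2$, so one obtains
\[
c \cdot d \;\geq\; \frac{\|d\|_2}{\max_{d'}\|d'\|_2} \cdot \frac{c\cdot (x^*-x_k)}{N},
\]
with the maximum taken over all edge directions of the polytope. Iterating this recursion as in Kitahara--Mizuno costs an extra factor of $\max_{d'}\|d'\|_2$ in the iteration count, which accounts for the $\sqrt{n}$-type factor appearing in every part of the theorem.

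What remains is to compute $\max_{d}\|d\|_2$ in each of the four cases. For the fractional perfect matching polytope, edge directions are supported on alternating cycles of length at most $n$, giving norm of order $\sqrt n$. For the fractional matching polytope one must also account for half-integer ``near-matching'' extensions that contribute an extra $\sqrt 2$, yielding $\sqrt{2n}$. For the Birkhoff polytope, edge directions correspond to bipartite cycles of length at most $2n$, and after the standard halving the norm is bounded by $\sqrt{n/2}$. For the shortest-path polytope, edge directions are differences of two $s$--$t$ paths with no arcs in common, and a short combinatorial argument bounds the support by roughly $2n/3$. Substituting these bounds into the modified recursion above produces exactly the iteration bounds in parts (a)--(d).

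The main obstacle will be the careful bookkeeping to match the precise numerical constants stated in each part: the coefficients $2n$, $4n$, $n$, $n-1$, and the various square-root factors depend sensitively on conventions (undirected vs.\ directed edges, halved vs.\ un-halved coordinates, presence of slack variables). The fractional matching and shortest-path cases will likely be the most delicate, as both require non-trivial structural arguments to bound the edge-norm ratio tightly while keeping the multiplicative constant out front equal to the number of facets $m$ used in Theorem \ref{pivot_diameter_bland}.
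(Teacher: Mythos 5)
The overall plan --- adapt Kitahara--Mizuno's potential-function argument to steepest edge and then compute edge-norm parameters for each polytope --- is in fact what the paper does (Theorem~\ref{theorem_steepest_edge} establishes the bound $n\left[\frac{m\gamma\nu}{\delta\mu}\log\left(m\frac{\gamma}{\delta}\right)\right]$ on the number of BFS, and its variants with $\|x^*\|_1$ in place of $m\gamma$ are then specialized to each polytope). However, there are several concrete misstatements that would derail the proof as written.

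The extra factor over the Dantzig / greatest-improvement bound is the \emph{ratio} $\nu/\mu$ of the maximum to minimum edge $\ell_2$-norms, not the maximum alone. Steepest edge normalizes by $\|d\|_2$, so $\nu$ enters when upper-bounding the decomposition of the remaining gap (Lemma~\ref{inequality_delta}), while $\mu$ enters when lower-bounding the objective decrease of a single pivot ($c^Tx^{(t)} - c^Tx^{(t+1)} = \Lambda^{(t)}\zeta^{\hat q}x_{\hat q}^{(t+1)}\geq \Lambda^{(t)}\mu\delta$). Your displayed inequality keeps $\|d\|_2$ in the numerator, which if lower-bounded by $\mu$ would recover the ratio, but the summary sentence (``costs an extra factor of $\max_{d'}\|d'\|_2$'') then drops $\mu$ entirely. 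The telltale sign is that your numbers for (a) and (b) are $\sqrt n$ and $\sqrt{2n}$, which are $\nu$ but not $\nu/\mu$ (those are $2\sqrt n$ and $2\sqrt{2n}$), while your numbers for (c) and (d) are $\sqrt{n/2}$ and $\sqrt{2n/3}$, which are $\nu/\mu$ but not $\nu$ (those are $\sqrt{2n}$ in both cases). You cannot obtain all four parts from the same argument unless you consistently use $\nu/\mu$ and track $\mu$ separately in each case ($\mu = 1/2$ for FPM and FM because of the half-integer vertices, $\mu = 2$ for Birkhoff because alternating cycles have length at least $4$, $\mu = \sqrt 3$ for the shortest-path polytope).

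The Kitahara--Mizuno argument also does not bound ``how often the same direction can be repeatedly selected.'' Lemma~\ref{lemma2kitahara} and Lemma~\ref{index_exponential_decrease} show that some positive basic variable decays geometrically and must hit zero (and stay zero) within a bounded number of iterations, and each of the $n$ variables can undergo this event at most once; the $n$ in front of the bound counts variables, not distinct edge directions. Likewise, the quantity that appears multiplying $\nu/(\delta\mu)$ is $\|x^*\|_1$ (or the cruder $m\gamma$), not the number $N$ of distinct edge directions; that count played a role in the zonotope section but is not present in this argument. With $N$ replaced by $\|x^*\|_1$ your chain of inequalities would be essentially the one used in the paper, but as stated the quantity $N$ does not control $\sum_q \zeta^q x^*_q$ and the recursion would not close.
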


In Section \ref{transportation_polytopes} we consider again the problem of estimating the monotone diameter of transportation polytopes.

\begin{theorem}
\label{monodiam_tp_polytope}
A $2 \times n$ transportation polytope has monotone diameter $\leq n$. Therefore, $2\times n$ transportation problems satisfy the monotone Hirsch conjecture.
\end{theorem}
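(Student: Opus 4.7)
The plan is to reformulate $T(2,n)$ via the substitution $y_j := x_{1j}$ as the continuous-knapsack polytope $P = \{y \in \R^n : \sum_j y_j = a_1,\ 0 \leq y_j \leq b_j\}$, with objective $\sum_j d_j y_j$ for $d_j := c_{1j}-c_{2j}$. After relabelling so that $d_1 < d_2 < \cdots < d_n$, the optimum is the greedy vertex $v^\ast = (L^\ast, s^\ast, R^\ast)$ with $L^\ast = \{1,\dots,k\}$, $s^\ast = k+1$, $R^\ast = \{k+2,\dots,n\}$, for the unique cutoff $k$. Each vertex of $P$ is indexed by a pair $(L, s)$: $L$ is the saturated set ($y_j = b_j$), $s \notin L$ is the unique split ($y_s \in (0, b_s)$), and $R := [n] \setminus (L \cup \{s\})$ is the empty set.

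Second, I would describe the pivot structure. From $v = (L, s, R)$, each non-split column $j \neq s$ gives exactly one neighbour, obtained by entering the unique non-basic variable at $j$ and ratio-testing along the 4-cycle $\{(R_1,j),(R_2,j),(R_2,s),(R_1,s)\}$. The resulting pivot is either a \emph{swap} (column $j$ switches between $L$ and $R$, split unchanged) or a \emph{split change} ($j$ becomes the new split, $s$ moves fully to one side), with edge direction $\pm(e_j - e_s)$ in $y$-space. It is monotone iff $j$ is misplaced relative to $s$: $j \in L$ with $d_j > d_s$, or $j \in R$ with $d_j < d_s$.

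For the main step I would construct a monotone path from $v_0$ to $v^\ast$ of length at most $n$ using the following rule: at each non-optimal $v$, pivot on the misplaced column with the most extreme $d$-value (largest $d_j$ in $L_>$ if $L_> \neq \emptyset$, else smallest $d_j$ in $R_<$). One can verify in small examples that this rule always produces a path visiting pairwise distinct columns, which would immediately give the bound. To prove this in general I expect to combine (i)~a case-by-case check that each pivot strictly advances an explicit potential $\Phi$ measuring ``distance to $v^\ast$'' in terms of both misplaced columns and the sorted-order position of the split, and (ii)~an exchange argument showing that along a shortest monotone path no column is ever pivoted twice.

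The hardest step is designing $\Phi$ so that it strictly decreases at every pivot. The naive potential $\Psi(v) := |L_>| + |R_<|$ alone does not suffice, because a split-change pivot can swap one kind of misplacement for another and keep $\Psi$ unchanged. I expect the fix is to take $\Phi$ as a lexicographic combination of $\Psi$ and the distance between the sorted-order rank of $s$ and that of $s^\ast$, and to verify case by case (swap vs.\ split change, on the $L_>$ vs.\ $R_<$ side) that $\Phi$ strictly decreases under the extreme-pivot rule. The exchange argument in (ii), needed to close corner cases, amounts to showing that if the same column is pivoted twice then the path can be locally rerouted using the knapsack structure; this is the technically involved part but is plausible from small-$n$ experiments, where shortest monotone paths from the worst starting vertex always have length exactly $n$ with distinct pivoted columns, yielding both the $\le n$ bound and the monotone Hirsch statement $n < n+1 = 2n-(n-1)$.
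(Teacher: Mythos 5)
Your reformulation of $T(2,n)$ as a fractional-knapsack polytope with variables $y_j=x_{1j}$ is correct and in fact equivalent to the paper's spanning-tree picture: columns with $y_j=b_j$ are the demand leaves hanging on supply~$s_1$, columns with $y_j=0$ are the leaves on $s_2$, and the single split column is the one demand node adjacent to both supplies. Your description of the pivot structure (swap versus split change, direction $\pm(e_j-e_s)$) and the monotonicity criterion ($j\in L$ with $d_j>d_s$, or $j\in R$ with $d_j<d_s$) are all accurate, and your ``extreme pivot'' rule does appear to be a workable rule.

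However, the core of the proof is missing, and you say so yourself. You assert that the rule ``always produces a path visiting pairwise distinct columns'' but back this only with small experiments. You then propose to close the gap with (i) a potential $\Phi$ and (ii) an exchange argument, and neither is carried out. For (i), you correctly note that $\Psi=|L_>|+|R_<|$ can increase under a split change; your proposed fix, ``a lexicographic combination of $\Psi$ and the distance between the sorted-order rank of $s$ and that of $s^\ast$,'' is not verified and in fact does not obviously work: with $\Psi$ as the leading coordinate, a split change that increases $\Psi$ still increases $\Phi$, and with the rank distance leading, a split change on $L_>$ moving $s$ from below $s^\ast$ to a value $\ge s^\ast$ can increase the rank distance as well. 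For (ii), the exchange argument about \emph{shortest} monotone paths is both undone and somewhat beside the point: to bound the monotone diameter it suffices to exhibit one monotone path of length $\le n$ from each vertex, so what you actually need is a direct proof that the path you construct has distinct pivoted columns, not a statement about shortest paths. Finally, your notion of ``misplaced'' is taken relative to the \emph{current} split $s$ rather than the optimal split $s^\ast$, so nothing in the proposal rules out the possibility that the rule pivots a column that is already optimally placed (a column $j\in L$ with $s<j<s^\ast$ is in $L_>$ but belongs to $L^\ast$). It turns out the feasibility constraints $\sum_{j\in L}b_j<a_1<\sum_{j\in L}b_j+b_s$ force $\max L_>\ge s^\ast$ and $\min R_<\le s^\ast$, so the extreme pivot always targets a genuinely wrong column, but this is a nontrivial fact that your argument needs and does not supply. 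In short, the scaffolding is right, but the load-bearing lemma is absent.

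For comparison, the paper's proof avoids the potential/exchange machinery entirely. It works with the spanning-tree description and inducts on the number of demand nodes: if a demand node is already a correctly-placed leaf it is deleted; otherwise a single pivot on a cycle through two optimal-tree edges (justified by the paper's Lemma on improving cycles) is shown, by a short case analysis on whether the entering or leaving variable freezes a node, to create a new correctly-placed leaf, which is then deleted. Each step thus removes one demand node, so the path length is bounded by $n$. That argument produces the distinct-column property structurally, whereas your proposal still needs to manufacture it.
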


\section{Monotone and Simplex paths on Cubes \& Zonotopes}
\label{cubes&zonotopes}

In this section we present several results about monotone paths and simplex paths on cubes and zonotopes. We will see they
have more general applicability.

\begin{theorem}
\label{monodiam_cube}
Let $C\subset \mathbb R^n$ be a polytope combinatorially equivalent to a hypercube. 
Then $\text{mono-diam}(C) = n$. Furthermore, there exists an ordering of the facets of $C$ such that Bland's pivot rule reaches the optimal solution in at most $n$ steps for any initial vertex.
\end{theorem}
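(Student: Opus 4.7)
The plan is to separate the two claims: the monotone-diameter statement and the Bland-ordering statement.

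For $\text{mono-diam}(C) = n$, I will use that $C$ being combinatorially an $n$-cube groups its $2n$ facets into $n$ complementary pairs, each vertex lying on exactly one facet of each pair. Fix a non-degenerate objective $c$ with unique optimum $v^*$, and label the vertices by $\{0,1\}^n$ so that coordinate $i$ records which of the $i$-th pair of facets the vertex lies on, with $v^*$ corresponding to the all-zeros label. Given any starting vertex $v$, let $S = \{i : v_i = 1\}$ and $k = |S|$. Intersecting, for each $j \notin S$, the facet of the $j$-th pair containing $v^*$ (which equals the facet containing $v$, since $v_j = v^*_j = 0$) produces a face $F$ of $C$ that is a combinatorial $k$-cube containing both $v$ and $v^*$. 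Restricted to $F$ the orientation $G(A,b,c)$ is still LP-admissible, and its unique sink is $v^*$, because $v^*$ minimizes $c$ globally, hence on $F$. If $v \neq v^*$ then $v$ is not the sink of $F$, so it admits an improving in-face neighbor obtained by flipping some coordinate $i \in S$; that neighbor sits at Hamming distance $k-1$ from $v^*$. Iterating yields a $c$-monotone path of length exactly $k \leq n$, giving $\text{mono-diam}(C) \leq n$. The matching lower bound follows by starting from the vertex antipodal to $v^*$ (all coordinates equal to $1$): its Hamming distance to $v^*$ is $n$, and every monotone edge flips a single coordinate.

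For the Bland claim, I will choose an ordering tailored to $v^*$. Let $F_i^{-}$ denote the facet of the $i$-th pair containing $v^*$ and $F_i^{+}$ the other. Assign priorities $1, \dots, n$ (in any order) to the \emph{antipodal} facets $F_i^{+}$, and priorities $n+1, \dots, 2n$ to the \emph{sink} facets $F_i^{-}$. At any non-sink vertex $v$ the face-descent argument above produces an improving edge that flips some coordinate $i$ with $v_i = 1$; the leaving facet of that edge is $F_i^{+}$, of priority at most $n$. Any competing improving edge must flip some coordinate $j$ with $v_j = 0$ and hence leaves $F_j^{-}$, of priority strictly greater than $n$. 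Consequently Bland's rule always selects an edge that flips a $v_i = 1$ coordinate down to $0$, strictly decreasing the Hamming distance to $v^*$ by one. Since the initial distance is at most $n$, Bland's rule reaches $v^*$ in at most $n$ steps from any starting vertex.

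The main obstacle I foresee is the face-descent step: carefully verifying that an arbitrary face of $C$ inherits an LP-admissible orientation with $v^*$ as its unique sink, so that an in-face improving neighbor is guaranteed at every non-sink vertex of the face. This is the one spot where LP-admissibility as ``unique sink on every face'' is genuinely used; once it is in place, both conclusions reduce to bookkeeping with the $\{0,1\}^n$ labeling.
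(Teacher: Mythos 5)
Your proof is correct and follows essentially the same route as the paper's: both arguments restrict attention to the subface spanned by the facets containing both $v$ and $v^*$ (the paper packages this as an induction on dimension, you unroll it via Hamming-distance bookkeeping), and both use the identical facet ordering for Bland's rule. One small slip worth noting: the sentence ``any competing improving edge must flip some coordinate $j$ with $v_j=0$'' is not literally true, since several coordinates in $S$ can simultaneously yield improving edges, but your argument only needs that the minimum-priority improving edge has priority at most $n$, which holds because the face-descent edge supplies such a candidate.
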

\begin{proof}
We first prove by induction on $n$ that the monotone diameter is $n$. For $n=1$, the result is trivial.
Assume now that the result is true for any combinatorial cube up to dimension $n-1$.
Consider an arbitrary vertex $x\neq x^*$ of $C$. 
There must exist an improving edge going out of $x$. Consider a facet $F$ containing $x$ but that does not contain this edge.
If $x^*\in F$, we are done by the induction hypothesis. 
Otherwise $x^*$ belongs to the ``opposite'' facet. This is the facet of the polytope which does not have any vertex in common with the facet $F$. For example if $C$ is the regular hypercube, this is the parallel facet to $F$. We take the improving edge to that opposite facet and apply the induction hypothesis. To conclude note that the monotone diameter is exactly $n$ because there exists a vertex which needs at least $n$ pivots to reach the optimal solution.
\medskip

Now let us present good orderings of the facets for Bland's pivot rule.
Denote by $x^*$ the optimum vertex. We choose an ordering such that the first $n$ facets satisfy $x^* \notin F_i$ for $1\leq i\leq n$ and the last $n$ facets satisfy $x^*\in F_i$ for $n+1\leq i\leq 2n$. We will prove that Bland's rule with this ordering follows the path described above. More precisely, we prove that at each step, the index of the entering variable is in $\{1,\ldots,n\}$ while the index of the leaving variable is in $\{n+1,\ldots,2n\}$ so inserted variables will never be removed from the basis.

Consider an arbitrary vertex $x\neq x^*$. Let $i_1,\ldots,i_k>n$ be the indices such that $x\in F_i$. Since $x$ is not the optimum, there must exist an improving edge from $x$ in the cube $F_{i_1}\cap \ldots \cap F_{i_k}$ of smaller dimension $n-k$. Consider the facet $F_i$ of the $n$-dimensional cube such that $x\in F_i$ and that does not contain this improving edge. Note that $i\leq n$. Otherwise $x^*\in F_i$, therefore $F_i$ is one of the facets $F_{i_1},\ldots,F_{i_k}$ which is impossible because the improving edge is contained in their intersection.

The entering variable $\hat i$ chosen by Bland's pivot rule satisfies $\hat i\leq i\leq n$. Note that $x^*\notin F_{\hat i}$ so $x^*$ is contained in the ``opposite'' facet which corresponds to the leaving variable. Therefore the index of the leaving variable is greater than $n$. Then, variables of index $\leq n$ cannot be removed from the basis.
\end{proof}

Note that these good orderings of the facets for Bland's pivot rule are very rare. Since the $n$ facets not containing the optimum should have the first $n$ indices in the ordering while the $n$ facets containing the optimum should have the last $n$ indices, there are $(n!)^2$ such good orderings among the $(2n)!$ possible orderings of the facets. Therefore, the proportion of good orderings for Bland's pivot rule is $\frac{1}{\binom{2n}{n}}$.

Next, we discuss the monotone diameter of another family of polytopes: zonotopes. A zonotope is the Minkowski sum of a set of line segments.



\begin{lemma}
Let $Z(v^1,\ldots, v^m)\subset \mathbb R^n$ be the zonotope generated by those directions. Assume any two directions $v^i,v^j$ are non-colinear. In at most $m$ augmenting steps, one can go from $\hat x$, an initial vertex, to $x^*$ the optimum. This bound is tight. Furthermore, any monotone path has at most $m$ steps so any pivot rule will take at most $m$ steps to the optimum.
\end{lemma}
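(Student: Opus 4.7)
The plan rests on two standard structural facts about zonotopes. First, because the $v^i$ are pairwise non-collinear, every edge of $Z$ is a translate of exactly one generator $v^i$. Second, each vertex $x$ of $Z$ admits a unique representation $x = \sum_{i=1}^m \lambda_i v^i$ with $\lambda \in \{0,1\}^m$; this uniqueness follows because $x$ is the unique maximizer on $Z$ of some linear functional $\ell$, and one can perturb $\ell$ so that $\ell \cdot v^i \neq 0$ for every $i$, whence the pulled-back functional $\lambda \mapsto \sum_i \lambda_i (\ell \cdot v^i)$ has a unique maximizer on $[0,1]^m$. Using this, I attach to each vertex $x$ a sign vector $\sigma(x) \in \{+,-\}^m$ by $\sigma_i(x) = +$ iff $\lambda_i = 1$, and two vertices turn out to be joined by an edge parallel to $v^i$ iff their labels differ only in the $i$-th coordinate.

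Granting these facts, the main observation is almost immediate. I would assume $c$ is generic ($c \cdot v^i \neq 0$ for all $i$), perturbing slightly if necessary, and let $\sigma^* \in \{+,-\}^m$ be the label of the unique optimum $x^*$. Crossing a $v^i$-edge is $c$-improving precisely when the $i$-th coordinate of the label flips from $-\sigma^*_i$ to $\sigma^*_i$; the reverse flip is $c$-worsening and is forbidden on any monotone path. Therefore, along any monotone path each index $i \in \{1,\ldots,m\}$ can be used at most once, bounding the length of any monotone path (and hence of any simplex path, for any pivot rule) by $m$.

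For tightness, I would use the central symmetry of $Z$ about $\tfrac{1}{2}\sum_i v^i$: the antipode $\hat x := \sum_{i=1}^m v^i - x^*$ is itself a vertex of $Z$ and has label $-\sigma^*$. Any monotone path from $\hat x$ to $x^*$ must flip every one of the $m$ label coordinates at least once, and by the bound above at most once, so its length is exactly $m$.

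The only non-routine step is verifying the uniqueness of the $\{0,1\}^m$ representation at vertices, which underlies the sign-vector labeling. An alternative, essentially equivalent route is to iterate the Minkowski decomposition $Z = Z_{-i} + [0,1]\,v^i$ (with $Z_{-i}$ the zonotope generated by the remaining vectors): this partitions the vertices of $Z$ into a bottom layer ($\lambda_i = 0$) and a top layer ($\lambda_i = 1$), and $v^i$-edges are the only transitions between the two; monotonicity then forces all such transitions on the path to go in the single direction dictated by $\operatorname{sign}(c \cdot v^i)$, so at most one occurs. Once either formulation is granted, the rest of the argument is simply counting coordinate flips.
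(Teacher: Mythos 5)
Your proof is correct and follows essentially the same approach as the paper's: both identify vertices with their sign/index sets $\{0,1\}^m$, observe that monotone edges can only flip a coordinate toward the optimum's sign pattern, conclude each generator is used at most once, and exhibit the antipodal vertex (the maximizer of $-c$, i.e.\ $\sum_{j:\, c\cdot v^j>0} v^j$) as a start point forcing exactly $m$ flips. You are somewhat more careful about genericity of $c$ and the uniqueness of the $\{0,1\}^m$ labeling, which the paper leaves implicit, but the argument is the same.
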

\begin{proof}
Let $c\in \mathbb R^n$. We define $J^+=\{j\mid c^Tv^j>0\}$ and $J^-=\{j\mid c^Tv^j <0\}.$
Observe that $x^* =\sum_{j\in J^-} v^j$.
Consider a starting point $\hat x$. We can write it as 
$\hat x=\sum_{j\in S(\hat x)} v^j$ for a certain subset $S(\hat x)\subset \{1,\ldots,m\}$.

Two adjacent vertices of the zonotope differ by $\pm v^i$ for some $i$. Then, the only edges a monotone path can use are $\epsilon_i v^i$ where $\epsilon_i=1$ if $i\in J^-$ and $\epsilon_i=-1$ if $i\in J^+$. Furthermore, the path can follow each of these directions at most once because once $\epsilon_i v^i$ is added, this term cannot be removed by the other possible directions. Then, the length of the path is at most $m$.

Furthermore, since the admissible edges are of the form $\epsilon_i v^i$, the point $\tilde x=\sum_{j\in J^+} v^j$ is at distance at least $m$ from the optimum. Note that $\tilde x$ is a true vertex of the zonotope because it is the optimum for the cost function $-c$.
\end{proof}


\begin{lemma}
Let $Z(v^1,\ldots, v^m)\subset \mathbb R^n$ be a zonotope. Assume any pair of directions $v^i,v^j$ are non-colinear.
It has at least $2m$ facets.
\end{lemma}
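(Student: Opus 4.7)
The plan is to use the central symmetry of $Z$ to identify its facets with the hyperplanes of the linear matroid of $v^1,\ldots,v^m$ (counted with sign), and then apply a classical matroid inequality. Without loss of generality, the generators span $\R^n$ (otherwise pass to the affine hull of $Z$, which does not affect the facet count). Translating so that $Z=\sum_{i=1}^m[-v^i/2,\,v^i/2]$, the zonotope is centrally symmetric about the origin, so its facets come in antipodal pairs and the total facet count equals $2N$, where $N$ denotes the number of facet-normal directions up to sign.

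Next I would identify these $N$ directions with hyperplanes. For any $c\in\R^n\setminus\{0\}$, the face of $Z$ maximizing $c$ is a translate of the subzonotope generated by $\{v^i : c\cdot v^i=0\}$, and so is a facet (dimension $n-1$) if and only if this subset linearly spans the hyperplane $c^\perp$. Hence $N$ equals the number of hyperplanes of $\R^n$ that are spanned by some subset of $\{v^1,\ldots,v^m\}$---equivalently, the number of copoints (rank-$(n-1)$ flats) of the linear matroid $M$ associated with the configuration. The pairwise non-colinearity assumption translates precisely into $M$ being a simple matroid (no loops, no parallel elements) of rank $n\ge 2$ (the case $n=1$ forces $m\le 1$ and is trivial).

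To conclude, I would invoke the classical Motzkin--Basterfield--Kelly theorem, a generalization of the de Bruijn--Erd\H{o}s theorem: every simple matroid of rank at least $2$ on $m$ elements has at least $m$ copoints. Applied to $M$ this yields $N\ge m$ and therefore at least $2m$ facets for $Z$. The main obstacle is this final appeal to the matroid inequality. For a self-contained argument one could instead proceed by induction on $m$: if a new generator $v^{m+1}$ lies outside the span of the previous ones, the enlarged zonotope is a prism and gains two ``lid'' facets; if it lies in the span, the projection of $Z$ along $v^{m+1}$ is a full-dimensional $(n-1)$-dimensional zonotope, and one checks that its facet count exceeds the number of facets of $Z$ whose normals are perpendicular to $v^{m+1}$, yielding by central symmetry at least two genuinely new facets in $Z+[0,v^{m+1}]$.
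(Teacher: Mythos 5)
Your proof is correct and takes a genuinely different route from the paper's. Both arguments begin from the same structural fact: the facets of a (full-dimensional) zonotope come in antipodal pairs, one pair for each hyperplane through the origin that is spanned by a subset of the generators, so it suffices to exhibit $m$ such hyperplanes. You identify these hyperplanes with the copoints of the simple linear matroid on $\{v^1,\ldots,v^m\}$ and invoke the Motzkin--Basterfield--Kelly theorem (a simple matroid of rank $\geq 2$ on $m$ elements has at least $m$ copoints), which settles the count in one stroke. The paper instead attempts a direct construction: it fixes a basis $v^1,\ldots,v^n$, takes the $n$ subsets $S^i=\{v^1,\ldots,v^n\}\setminus\{v^i\}$, and for each remaining $v^j$ uses the matroid exchange axiom to produce a further linearly independent $(n-1)$-element subset, claiming $m$ subsets in all. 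That is more elementary in flavor, but as written it conflates independent $(n-1)$-element subsets with the hyperplanes they span; two distinct such subsets can span the same hyperplane and hence give the same facet pair, and the paper does not verify that the exchange-produced subsets span genuinely new hyperplanes. Your appeal to MBK avoids this issue entirely, at the cost of citing a nontrivial classical theorem rather than giving a self-contained argument; the inductive alternative you sketch at the end is closer in spirit to what the paper is reaching for, but its second subcase (new generator inside the span) would need the same care about coinciding hyperplanes to be made airtight.
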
 
\begin{proof}
Let $v^{i_1}, \ldots, v^{i_{d-1}}$ be a linearly independent subset of size $n-1$.
Then $Z(v^1,\ldots, v^m)$ has two facets that are translates of $Z(v^{i_1}, \ldots, v^{i_{d-1}})$.
This is because for an objective function $c\in \ker([v^{i_1}, \ldots, v^{i_{d-1}}])$ the optimum facet of $Z(v^1,\ldots, v^m)$ with respect to $\pm c$ are precisely these facets. It suffices to show that there are $\geq m$ distinct subsets of this type.

Without loss of generality, let $v^1,\ldots, v^n$ be linearly independent.\\
$S^i=\{v^1,\ldots, v^{i-1},v^{i+1},\ldots,v^n\}$, $i=1,\ldots, n$ are $n$ such subsets.
For any $v^j$ with $j\geq n+1$, there exists $v\in \{v^1, \ldots, v^n\}$
such that $\{v^1,\ldots, v^n\}\setminus \{v\} \cup \{v^j\}$ is linearly independent (by the matroid axiom).
Therefore drop $v$ and add $v^j$, the corresponding subset gives two more facets.
We get $m-n$ additional subsets like this.
\end{proof}

The following theorem shows that zonotopes satisfy the monotone Hirsch conjecture.

\begin{theorem}
$\text{mono-diam}(Z(v^1,\ldots,v^m)) = m \leq \frac{| \text{facets}|}{2} \leq |\text{facets}| - n $.
\end{theorem}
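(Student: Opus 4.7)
The plan is to assemble the three parts of the chain from material already proved, noting only where a small extra observation is needed.

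For the equality $\text{mono-diam}(Z(v^1,\dots,v^m))=m$, I would combine the two preceding lemmas. The first lemma shows that from any starting vertex $\hat{x}$, the optimum $x^*$ can be reached in at most $m$ monotone steps, so $\text{mono-diam}(Z)\le m$. For the matching lower bound, the same lemma exhibits the vertex $\tilde{x}=\sum_{j\in J^+} v^j$ (the maximizer of $c$) which is the unique opposite vertex to $x^*=\sum_{j\in J^-}v^j$ along the direction $c$, and every admissible edge from $\tilde{x}$ toward $x^*$ must flip exactly one of the $m$ signs $\epsilon_i$; since each of the $m$ directions $v^i$ appears at most once in any monotone path, the shortest monotone path from $\tilde{x}$ to $x^*$ has length exactly $m$. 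Choosing a nondegenerate $c$ realizing $J^-=\{1,\dots,m\}$ (or any such $c$ for which $\tilde{x}$ is a vertex) yields $\text{mono-diam}(Z)\ge m$.

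The middle inequality $m\le \tfrac{|\text{facets}|}{2}$ is just a restatement of the second lemma, which established that $Z$ has at least $2m$ facets (obtained as the two $\pm c$-optimal translates of each sub-zonotope generated by an $(n-1)$-element linearly independent subfamily of $\{v^1,\dots,v^m\}$).

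For the final inequality $\tfrac{|\text{facets}|}{2}\le |\text{facets}|-n$, I would rewrite it as $|\text{facets}|\ge 2n$, and derive this from the second lemma together with the standing full-dimensionality assumption. Since $Z\subset\R^n$ is full-dimensional, the vectors $v^1,\dots,v^m$ must span $\R^n$, so $m\ge n$; combined with the lower bound $|\text{facets}|\ge 2m$ from the second lemma, this gives $|\text{facets}|\ge 2n$, which is exactly the inequality needed. No step here is a real obstacle; the only point worth stating explicitly in the write-up is the reduction of the last inequality to $|\text{facets}|\ge 2n$ and its justification via full-dimensionality, since the other two parts are immediate from the preceding lemmas.
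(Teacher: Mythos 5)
Your proposal is correct and matches the paper's (implicit) approach: the theorem is stated with no separate proof precisely because all three parts follow directly from the two preceding lemmas, in exactly the way you assemble them. Your explicit note that the final inequality reduces to $|\text{facets}|\ge 2n$ via $m\ge n$ (full-dimensionality) is the only step the paper leaves entirely unspoken, and it is a correct and worthwhile observation.
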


\begin{figure}
    \centering
    \includegraphics[width=12cm]{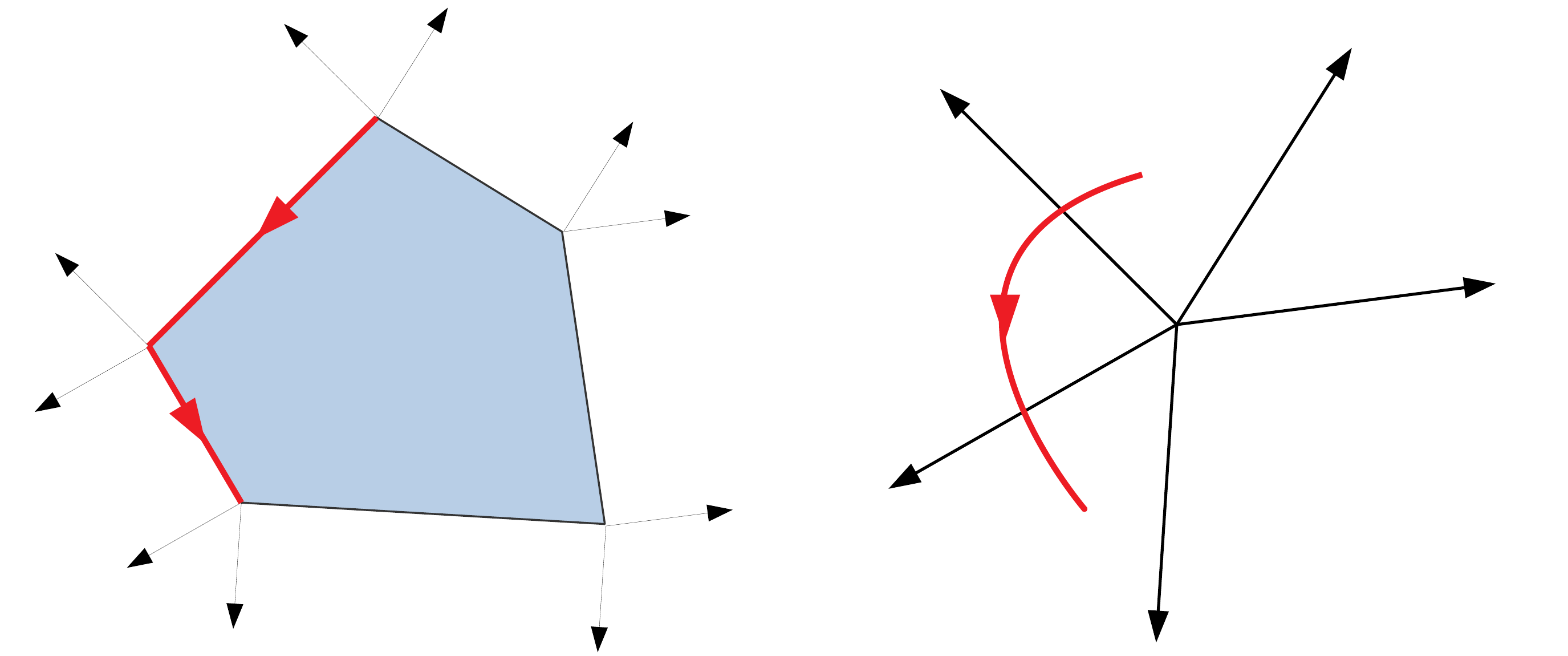}
    \caption{Left, a path on a polytope and right, the corresponding path on the normal fan.}
    \label{normal_fan}
\end{figure}

We will now use this result to bound the monotone diameter of general polytopes. For this, let us define the \emph{normal cone} of a vertex $v$ as the set of objective vectors $c$ such that $v$ is the optimum vertex for the corresponding objective function, and the \emph{normal fan} as the collection of normal cones for all vertices of the polytope (see \cite{Bonifas2014} and Figure \ref{normal_fan} for an illustration).

\begin{lemma}(Gritzmann-Sturmfels, Proposition 2.1.8. in \cite{Gritzmann_Sturmfels1993})
\label{gritzmann}
Let $P\subset \mathbb R^n$ be a polytope and let $E$ be a finite set of vectors containing all the direction of edges of $P$.
of $P$. The normal fan of the zonotope generated by $E$ is a refinement of the normal fan of $P$, therefore the length of the shortest
paths in $Z(E)$ upper bound the length of the shortest paths in $P$.
\end{lemma}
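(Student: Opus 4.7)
The plan is to establish the refinement of normal fans first, then derive the path-length comparison by projecting shortest monotone paths from the zonotope down to $P$.

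First, I would identify the normal fan of $Z(E)$ explicitly. Writing $Z(E)=\sum_{v\in E}[0,v]$ as a Minkowski sum, linearity of the objective gives that the $c$-maximum of $Z(E)$ is $\sum_{v\in E,\;\sprod{c}{v}>0} v$, which depends only on the sign pattern of the numbers $\sprod{c}{v}$ for $v\in E$. Consequently, the normal fan of $Z(E)$ is exactly the hyperplane arrangement $\mathcal{A}=\{v^{\perp}:v\in E\}$, with top-dimensional cones indexed by sign vectors in $\{+,-\}^{E}$.

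Second, I would verify that every codimension-one wall of the normal fan of $P$ lies inside some hyperplane $v^{\perp}$ with $v\in E$. For any edge $\{u,u'\}$ of $P$ with direction $v\in E$, the common facet of the normal cones $N_P(u)$ and $N_P(u')$ is precisely the normal cone of the edge, which is perpendicular to $v$. Since $E$ contains all edge directions of $P$, every wall of the normal fan of $P$ sits in $\mathcal{A}$, so the normal fan of $Z(E)$ refines the normal fan of $P$; equivalently, each cone $N_P(w)$ is a union of cones of $Z(E)$'s normal fan.

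Third, I would use the refinement to transfer paths. Fix a generic objective $c$, let $v^{*}$ and $\tilde v^{*}$ be the respective $c$-maxima of $P$ and $Z(E)$, and let $u$ be an arbitrary starting vertex of $P$. Pick a vertex $\tilde u$ of $Z(E)$ whose normal cone lies inside $N_P(u)$; the refinement automatically ensures $N_{Z(E)}(\tilde v^{*})\subseteq N_P(v^{*})$. Take a shortest $c$-monotone path $\tilde u=\tilde x_0,\tilde x_1,\dots,\tilde x_k=\tilde v^{*}$ in $Z(E)$, and define $x_i$ to be the unique $P$-vertex with $N_{Z(E)}(\tilde x_i)\subseteq N_P(x_i)$. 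Each consecutive pair $x_i,x_{i+1}$ is either equal (the wall crossed in $\mathcal{A}$ lies inside some cone $N_P(w)$) or adjacent in $P$ (the wall coincides with a wall of $P$'s fan). Collapsing repetitions yields a $c$-monotone path in $P$ from $u$ to $v^{*}$ of length at most $k$, proving the desired inequality.

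The main obstacle is verifying that monotonicity is preserved under the projection $\tilde x_i\mapsto x_i$. Crossing a shared wall from $\tilde x_i$ to $\tilde x_{i+1}$ moves $c$ from the $Z(E)$-cone contained in $N_P(x_i)$ into the one contained in $N_P(x_{i+1})$; since every direction in $N_P(x_{i+1})$ is maximized at $x_{i+1}$, we obtain $\sprod{c}{x_{i+1}}\ge\sprod{c}{x_i}$, with equality ruled out by the generic choice of $c$. The same genericity lets me assume that each wall crossing is through the relative interior of a unique wall, so the lifting is well defined throughout.
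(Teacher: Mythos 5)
Your overall strategy is sound and is in the spirit of what the paper relies on: identify the normal fan of $Z(E)$ as the central arrangement $\{v^{\perp}:v\in E\}$, observe that every wall of the normal fan of $P$ lies on a hyperplane of this arrangement because walls are normal to edge directions, conclude that the normal fan of $Z(E)$ refines that of $P$, and then push a shortest $c$-monotone path in $Z(E)$ down to $P$ by mapping each zonotope vertex to the unique $P$-vertex whose normal cone contains it. Steps 1, 2 and the projection set-up are all correct. Note that the paper does not actually prove this lemma (it cites Gritzmann--Sturmfels), but the projection argument is exactly what its proof of Theorem \ref{mono_diam_zonotope} invokes.

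The genuine gap is in the monotonicity paragraph. The sentence ``crossing a shared wall \ldots moves $c$ from the $Z(E)$-cone contained in $N_P(x_i)$ into the one contained in $N_P(x_{i+1})$'' does not parse: the objective $c$ is fixed throughout the path and never leaves $N_P(v^*)$. And the inference ``since every direction in $N_P(x_{i+1})$ is maximized at $x_{i+1}$, we obtain $\sprod{c}{x_{i+1}}\geq\sprod{c}{x_i}$'' is a non sequitur; that statement applies to objectives $d\in N_P(x_{i+1})$, but $c$ does not lie in $N_P(x_{i+1})$ unless $x_{i+1}=v^*$, so it gives no information about $\sprod{c}{x_i}$ versus $\sprod{c}{x_{i+1}}$. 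The correct argument is the one the paper sketches: when $x_i\neq x_{i+1}$, the wall crossed by the zonotope step $\tilde x_i\to\tilde x_{i+1}$ is simultaneously a wall of the normal fan of $P$, so both $\tilde x_{i+1}-\tilde x_i$ and $x_{i+1}-x_i$ are normal to the same hyperplane; the containments $N_{Z(E)}(\tilde x_j)\subseteq N_P(x_j)$, $j=i,i+1$, place the two pairs of cones on matching sides of that hyperplane, so $x_{i+1}-x_i$ is a positive scalar multiple of $\tilde x_{i+1}-\tilde x_i$, and therefore $\sprod{c}{x_{i+1}-x_i}$ has the same sign as $\sprod{c}{\tilde x_{i+1}-\tilde x_i}$. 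That is what preserves monotonicity; you should replace your last paragraph with this edge-direction argument.
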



\begin{proof}[Proof of Theorem \ref{mono_diam_zonotope}]
The first inequality of the theorem follows from Lemma \ref{gritzmann} because we view a path on the graph 
of the polytope as a sequence of normal fans where consecutive normal fans share a facet. The normal to this shared facet is the direction of the corresponding edge between the two vertices on the graph of the polytope. Therefore
any monotone path $p$ on the zonotope for the linear function $c$ leads to a path $\tilde p$ with smaller length on the original polytope. $\tilde p$ is still monotone for $c$ because the directions of the edges of $\tilde p$ are contained in the directions of the edges of $p$ according to Lemma \ref{gritzmann}.
\end{proof}

We can now apply Lemma \ref{mono_diam_zonotope} to several polytopes. The essential message is that if the set of edge-directions is ``small'' or polynomially bounded, then we can obtain a bound on the height using the above result. While we show
some nice situations below, in most cases this is not useful (see \cite{Onn+Pinchasi} where a lower bound on the number of edge directions is discussed).





\begin{proof}[Proof of Theorem \ref{cor-TPsfixedk}]
Edges on transportation polytopes are alternating sign cycles on the bipartite graph. Since there are $k$ supply nodes, the length of the cycle is $2p$ for $2\leq p\leq k$. The number of such cycles of length $2p$ is $\frac{1}{p} \frac{n!k!}{(n-p)!(k-p)!}$. Then, the number of different edge directions is bounded by
\begin{equation*}
\sum_{p=2}^k \frac{1}{p} \frac{n^p k!}{(k-p)!} \leq n^k k! \sum_{p=2}^k \frac{1}{(k-p)! n^{k-p}} \leq e^{1/n} n^k k! 
\end{equation*}
and the proof follows.
\end{proof}

Finally, the above family in Corollary \ref{cor-TPsfixedk} is naturally generalized by $N$-fold linear programs, see Chapter 4 in \cite{DHKbook}.
In that case the defining matrix $A$ has a very specific shape as multiple copies of smaller matrices. We omit
details.

\section{Monotone and Simplex paths on $0/1$ and $0/\frac{1}{2}/1$ polyhedra}
\label{01polyhedra}

In this section, we give results on the height, monotone diameter, and the pivot height of some well-known polytopes.
It should be noted that the papers \cite{KitaharaMizuno2013,DKS-withcircuits} provide general polynomial bounds for $0/1$-polytopes and their monotone diameters are also polynomial. But in this section we look at specific families and thus we can obtain more precise polynomial bounds.

The height of a polytope $P$ is the length of the longest path in the directed graph of $P$. To start, note 
that the height gives a bound on the number of steps of the worst possible pivot rule of the Simplex algorithm. 
As shown in Section \ref{cubes&zonotopes} for some particular polytopes (e.g., zonotopes) the height is polynomially bounded, thus it gives a polynomial bound for the Simplex algorithm for \emph{any} pivot rule. However, it turns out that, for many polytopes of interest and for some well-known $0/1$ and $0/\frac{1}{2}/1$ polyhedra, monotone paths can be very long.  Here we collect some of the results about the height of polyhedra:

First, Pak \cite{pak2000} showed that the height of the Birkhoff polytope is exponential.

\begin{theorem}[Pak, Theorem 1.4. in \cite{pak2000}]
\label{pak_theorem}
There exists a linear function $\phi$ with a decreasing sequence of vertices of length $>C\cdot n!$ of the Birkhoff polytope on the bipartite graph $K_{n,n}$ for a universal constant $C>0$.
\end{theorem}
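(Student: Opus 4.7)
The plan is to construct, by induction on $n$, a linear cost $\phi$ on $B_n$ together with an explicit sequence of permutation matrices $\pi_0,\pi_1,\ldots,\pi_N$ of length $N=\Omega(n!)$ whose consecutive pairs are edges of $B_n$ and along which $\phi$ strictly decreases. Writing $L(n)$ for the maximum such $N$ over all $\phi$, the goal is a recurrence of the form $L(n)\ge n\,L(n-1)+(n-1)$, which, with base case $L(2)=1$, unrolls to $L(n)\ge n!/2=\Omega(n!)$, giving the result.

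The combinatorial input is the edge description of $B_n$: two permutation matrices $\pi,\pi'$ are adjacent iff $\pi^{-1}\pi'$ is a single cycle of $S_n$, so in particular every transposition yields an edge. I would partition $S_n$ into the $n$ blocks $B^{(k)}=\{\pi:\pi(1)=k\}$ for $k=1,\ldots,n$. Edges of $B_n$ internal to a block are exactly the single cycles supported on positions $\{2,\ldots,n\}$, so each block is isomorphic as an edge-graph to $B_{n-1}$ on the value set $[n]\setminus\{k\}$, while two consecutive blocks $B^{(k)}$ and $B^{(k-1)}$ are joined by the transposition swapping position $1$ with whichever position currently holds the value $k-1$, which is a valid edge of $B_n$. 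To drive monotonicity I would then set $\phi(\pi)=M\cdot\pi(1)+\psi(\pi)$ for a very large constant $M$, where $\psi$ is the cost supplied by the inductive hypothesis on $B_{n-1}$. Choosing $M$ larger than the oscillation of $\psi$ forces any $\phi$-monotone path to traverse the blocks in the order $B^{(n)},B^{(n-1)},\ldots,B^{(1)}$, and inside each block the path is $\phi$-monotone iff it is $\psi$-monotone. Concatenating the $n$ inductive sub-paths of length $L(n-1)$ with the $n-1$ linking transpositions then produces a $\phi$-monotone path of length $n\,L(n-1)+(n-1)$, establishing the recursion.

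The principal obstacle is the relabelling problem: the inductive cost $\psi$ is linear on $B_{n-1}$ with natural value set $[n-1]$, whereas inside block $B^{(k)}$ the positions $2,\ldots,n$ carry values in $[n]\setminus\{k\}$, so a single linear $\psi$ will in general not realise the inductive monotone path in every block; on top of that, the endpoints of the block-paths must align with the prescribed linking transpositions. I would attack this by choosing $\psi$ of a highly structured form---for instance $\psi(\pi)=\sum_i w_i\,\pi(i)$ with exponentially growing weights $w_i$---so that the induced monotone order on permutations of any $(n-1)$-subset of $[n]$ depends only on the relative order of the values at the positions and is invariant under the natural relabellings, and then arranging the endpoint of the inductive path in $B^{(k)}$ to be swept onto the starting vertex of the inductive path in $B^{(k-1)}$ by the linking transposition. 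Making these inductive conventions cohere simultaneously across all $n$ blocks is the combinatorial heart of the argument, and the place where I would expect the bulk of the technical work to concentrate.
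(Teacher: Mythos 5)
The paper does not prove this theorem: it is stated as a quotation of Pak's Theorem~1.4 in \cite{pak2000} and is used as a black box in the proof of Theorem~\ref{long_path_matching}. So there is no internal proof to compare against, and your sketch has to be judged on its own merits.

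Your scaffolding is a natural one: partition $S_n$ into the blocks $B^{(k)}=\{\pi:\pi(1)=k\}$, note each block spans a face of $B_n$ combinatorially equivalent to $B_{n-1}$, use $\phi(\pi)=M\pi(1)+\psi(\pi)$ with $M$ large so that any $\phi$-monotone path visits the blocks in decreasing order of $\pi(1)$, and aim for the recursion $L(n)\ge nL(n-1)+(n-1)$, which unrolls to $L(n)=\Omega(n!)$. The adjacency facts you invoke are correct: $\pi$ and $\pi'$ are neighbours in $B_n$ iff $\pi^{-1}\pi'$ is a single cycle, so in particular transpositions give edges, and the linking steps are legitimate.

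The gap you flag is genuine, and the specific fix you sketch does not close it; in fact it collapses the recursion. The chaining works only if the linking edge leaving the terminal vertex of the sub-path in $B^{(k)}$ lands on the initial vertex of the sub-path in $B^{(k-1)}$. Take the rank-one cost $\psi(\pi)=\sum_{i\ge 2}w_i\,\pi(i)$ with exponentially separated weights $w_2\gg w_3\gg\dots$. Then the $\psi$-minimum of $B^{(k)}$ is the increasing arrangement $(k,1,2,\dots,k-1,k+1,\dots,n)$, and value $k-1$ sits at position $k$; swapping positions $1$ and $k$ produces $(k-1,1,\dots,k-2,k,k+1,\dots,n)$, which is precisely the $\psi$-minimum of $B^{(k-1)}$. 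So if, as one expects of a $\psi$-monotone path, the sub-path in $B^{(k)}$ terminates at its $\psi$-minimum, the linking transposition deposits you at the $\psi$-minimum of the next block and the sub-path there has length zero. The claimed $n\,L(n-1)+(n-1)$ collapses to $L(n-1)+(n-1)$, and summing that recursion gives only polynomial growth. To repair the argument you need either a cost matrix $W$ that is not of rank one, engineered so that block exits map to block entries, or a stronger induction hypothesis asserting long $\psi$-monotone paths starting from an arbitrary prescribed vertex, which you would then also have to propagate. Until one of these is worked out, the recursion is unsupported and the sketch does not yet constitute a proof.
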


Note that the graph of a face $F$ of a polytope $P$ is a proper subgraph of the graph of $P$. Therefore the height of a polytope is greater or equal to the height of any of its faces. Indeed, let $c$ be a cost function in $F$. For $P$ take that same cost function parallel to $F$ and denote it by $\tilde c$. Then any monotone path in $(F,c)$ is a monotone path in $(P,\tilde c)$.\medskip

Let us recall the definitions and basic properties of the combinatorial polytopes we will consider in this Section. A \emph{matching} 
in a graph $G=(V,E)$ is a subset of edges $M\subset E$ such that every vertex meets at most one edge of $M$. A matching is \emph{perfect} if every vertex meets exactly one edge of $M$.
The perfect matching polytope (M) of $G$ is defined as the convex hull of the $0/1$ incidence vectors of matchings i.e.,

\begin{equation*}
    \text{M(G)} = \text{conv}\{ \chi^M: M \text{ is a matching of G}\}.
\end{equation*}

The perfect matching polytope (PM) of $G$ is the convex hull of the perfect matchings. Note that the perfect matching polytope on the complete bipartite graph is the Birkhoff polytope.

\begin{equation*}
    \text{PM(G)} = \text{conv}\{ \chi^M:  M \text{ is a perfect matching of G} \}
\end{equation*}

For these two polytopes, two matchings are adjacent if and only if the union of their support graph contains a unique cycle (see Lemma 1 in \cite{rispoli1992}). A set of inequalities describing these polytopes is given by the Edmond's matching theorem \cite{edmonds1965}.

We also consider the relaxations of these polytopes obtained by omitting the \emph{odd cycle inequalities}. The fractional matching polytope (FM) of $G$ is defined by

\begin{equation*}
        \text{FM(G)} = \{ x\in \R^E(G): \; x_e\geq 0 \; \forall e\in E(G),\; x(\delta(v))\leq 1 \; \forall v\in V(G)\}
\end{equation*}

where $E(G), V(G)$ of course denote, respectively, the sets of edges and vertices of the graph $G$. Similarly, the fractional perfect matching (FPM) is described by

\begin{equation*}
        \text{FPM(G)} = \{ x\in \R^E(G): \; x_e\geq 0 \; \forall e\in E(G),\; x(\delta(v))= 1 \; \forall v\in V(G)\}.
\end{equation*}
Note that M(G) and PM(G) are respectively a face of FM(G) and FPM(G). The adjacency of these fractional polytopes is given in Theorem 25 of \cite{behrend}. In the following we will only use the fact that the graph of M(G) and PM(G) are, respectively, a subgraph of FM(G) and FPM(G).

A 2-matching of $G$ is a subset of edges $M$ such that every vertex is incident to exactly $2$ edges in $M$. Note that a 2-matching 
is the union of disjoint cycles. The perfect 2-matching polytope (P2M) of $G$ is defined as a $0/1$ polytope as follows,

\begin{equation*}
    \text{P2M(G)} = \text{conv}\{ \chi^M: M \text{ is a perfect 2-matching of G}\}.
\end{equation*}

Two 2-perfect matchings are adjacent if and only if the symmetric difference of their support graphs contains a unique alternating cycle (see Lemma 1 in \cite{rispoli1994} and Figure \ref{adjacency_p2m} for an illustration).

In the following, if the graph is not specified we will consider the complete graph $K_n$. 

\begin{figure}
    \centering
    \includegraphics[width=12cm]{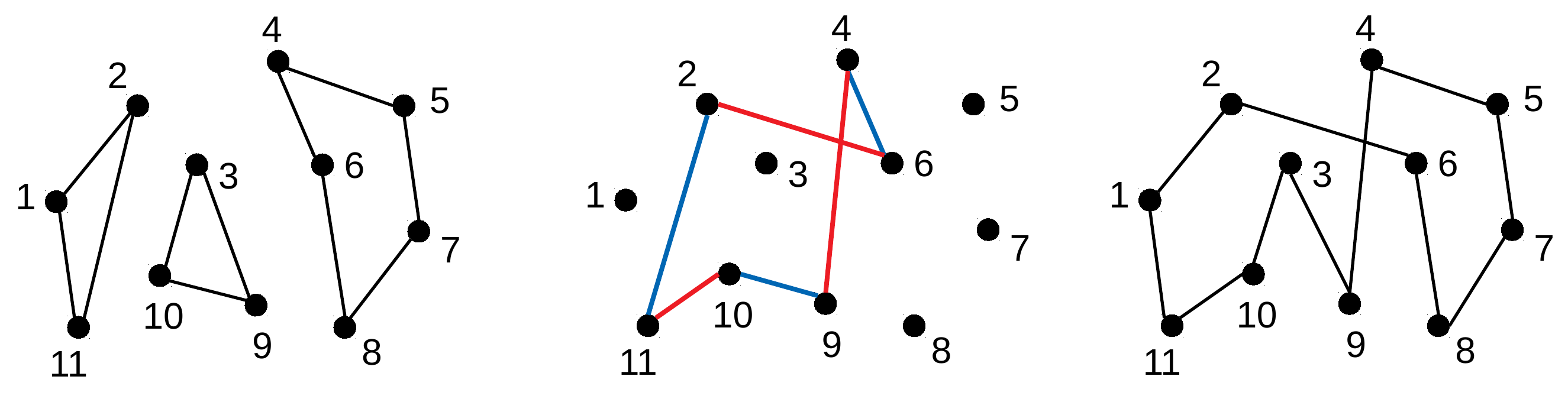}
    \caption{Left and right, two adjacent perfect 2-matchings. In the middle, the corresponding alternating cycle.}
    \label{adjacency_p2m}
\end{figure}

The traveling salesman polytope (TSP) on $K_n$ is the convex hull of tours i.e., cycles of length $n$. The TSP graph is therefore a proper subgraph of the perfect 2-matching polytope of $K_n$ (see \cite{RispoliCosares1998}).

Finally, the shortest path polytope on $n$ nodes is defined as the convex hull of paths from say node $1$ to node $n$ without cycles. A system of equations is given by
\begin{equation*}
\left \{ (x_{i,j}\geq 0)_{1\leq i\leq n-1, \ 2\leq j\leq n} : \, \sum_{j=2}^n x_{1,j} =1, \sum_{j\neq i} x_{i,j} -\sum_{j\neq i} x_{j,i}=0, \sum_{j\neq i}x_{i,j}\leq 1 ,\,2\leq i\leq n-1\right \}.
\end{equation*}
Two paths are adjacent if and only if the union of their support graph contains a unique cycle (see Lemma 2 in \cite{rispoli1992}).


\begin{proof}[Proof of Theorem \ref{long_path_matching}]
We show that the Birkhoff polytope is a face of each of the considered polytopes. We will denote by $x_{i,j}$ the component corresponding to the edge between nodes $i$ and $j$ for a vertex $x$ in the polytope. Note that graphs are non-oriented here.

Define $E_1:=\{1,\ldots,\lfloor\frac{n}{2}\rfloor\}$ and $E_2:=\{\lfloor\frac{n}{2}\rfloor+1,\ldots,2\lfloor\frac{n}{2}\rfloor\}$. For the matching polytope and the fractional matching polytope, the corresponding face can be described by the several equalities $x_{i,j}=0$ for $(i,j)\notin E_1\times E_2\cup E_2\times E_1$ and $x(\delta(i))=1$ for $i\in E_1\cup E_2$. In both the matching and fractional matching polytopes, these equalities describe the set of perfect matchings on the bipartite graph between $E_1$ and $E_2$ i.e., vertices of these facets are in exact correspondence with the vertices of the $K_{E_1,E_2}$ Birkhoff polytope. Furthermore, the adjacency between the perfect matchings of these faces is exactly the same as in the Birkhoff polytope. Hence the corresponding face is equivalent to the Birkhoff polytope with $2\times \lfloor\frac{n}{2}\rfloor $ nodes. The monotone diameter of these polytopes is therefore greater than the bound $C\lfloor\frac{n}{2}\rfloor!$ given in Theorem \ref{pak_theorem}.

For the perfect matching polytope we can simply take the equalities $x_{i,j}=0$ for $(i,j)\notin E_1\times E_2\cup E_2\times E_1$. The other equalities of the form $x(\delta(i))$ are already satisfied. We get the same bound $C\lfloor\frac{n}{2}\rfloor!$ for the monotone diameter.

The same argument holds for the fractional perfect matching polytope when $n$ is even. However, when $n$ is odd, matchings on $K_{E_1,E_2}$ are not vertices of the polytope anymore. In this case, we can restrict to the face $x_{n-2,n-1}=x_{n-1,n}=x_{n-2,n}=1/2$ and use the same arguments as above with $E_1:=\{1,\ldots, \frac{n-3}{2}\}$ and $E_2:=\{\frac{n-1}{2},\ldots,n-3\}$. We finally get the bound $C\lfloor\frac{n}{2}-1\rfloor!$ for the monotone diameter.
\end{proof}



\begin{proof}[Proof of Theorem \ref{long_path_tsp}]
Recall that $n$-tours, which are the vertices of the TSP, are also vertices of the perfect 2-matching polytope. If two tours are adjacent on the perfect 2-matching polytope, then they are also adjacent in TSP (see \cite{RispoliCosares1998}). Therefore it suffices to prove that there exists a long monotone path on the perfect 2-matching polytope going through $n$-tours only.

Denote by $x_{i,j}$ the component corresponding to the edge between nodes $i$ and $j$ for a vertex $x$ in the polytope. Consider the following linear function:
\begin{equation*}
\psi=x_{1,2}+\alpha x_{1,3}+\ldots+\alpha^{n-2} x_{1,n} + \alpha^{n-1} x_{2,3} +\alpha^n x_{2,4} +\ldots+\alpha^{\frac{n(n-1)}{2}-1} x_{n-1,n}
\end{equation*}
for $0<\alpha<1/2$ such that the linear order on the perfect 2-matching polytope or on the TSP is the lexicographic order on the edges with the following order:

$\{1,2\},\{1,3\},\ldots,\{1,n\},\{2,3\},\ldots,\{n-1,n\}$.

Denote by $x^*=(1,n,2,n-1,3,n-2,\cdots)$ the optimum for TSP, see Figure \ref{long_path_TSP}e). The initial tour is going to be the cycle $x^0=(1,2,\ldots,n)$. We will construct by induction a monotone path with exponential length. We denote by $L_n$ the length of this monotone path. For $n\geq 4$, assume that we have constructed these long monotone paths for $k=4,\cdots, n-1$. Let us now construct the path of length $L_n$.\\

Step 1:
We first restrict to $x_{1,2}=1$. We can get to the optimum $x^1$ of this facet in at least $L_{n-1}$ steps. Indeed if $x$ is a $(n-1)$-tour in the long path for $n-1$ nodes, define $\tilde x$ a  $n$-tour by dividing node $1$ into two nodes $1$ and $2$. The indices of the other nodes should be shifted by one accordingly. Recall that two 2-matchings are adjacent if and only if the symmetric difference of their edges defines a unique alternating cycle. Let $x_1$ and $x_2$ be two adjacent tours in the $(n-1)$-perfect 2-matching. Then either $\tilde x_1$ and $\tilde x_2$ are adjacent in the $n$-perfect 2-matching or $\tilde x_1$ and $\hat x_2$ are adjacent where $\hat x_2$ is the same tour as $\tilde x_2$ except the two nodes coming from the division of node $1$ have been switched. We can therefore construct a path of length $L_{n-1}$ corresponding to the same path for $(n-1)$-tours. We then get from the corresponding end point to the optimum of the facet $x_{1,2}=1$ (see Figure \ref{long_path_TSP}a)). These two tours might be distinct if we have to switch the two nodes coming from the division of node $1$, which takes at most one step.

\begin{figure}
    \centering
    \includegraphics[width=12cm]{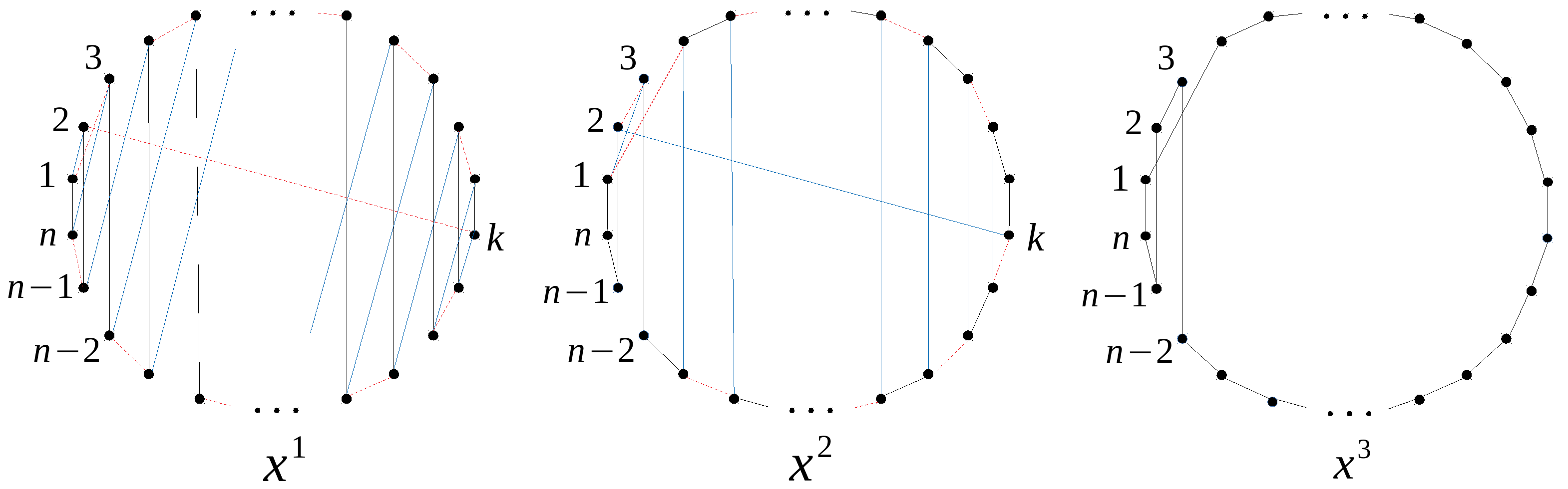}
    \caption{Step 2 of the monotone path on TSP. Edges in blue are the edges going to be deleted and the dashed red edges are going to be inserted. Since they form an alternating cycle, these tours are adjacent.}
    \label{step2}
\end{figure}

Step 2:
We now get in two improving steps to the tour $x^3=(1,4,5,\ldots,n-2,3,2,n-1,n)$ (see Figure \ref{step2}). The edges of the current vertex $x^1$ are $x^1_{i,n+1-i}=1$ for all $i$, $x^1_{1,2}=x^1_{i,n+3-i}=1$ for $i\geq 3$. We now get to the tour $x^2=(2,n-1,n,1,3,n-2,n-3,4,5,\ldots,k)$ which uses all the edges of the form $x_{i,n+1-i}$. Here $k=\frac{n}{2}+1$ if $n\equiv 0\mod 4$, $k=\frac{n}{2}$ if $n\equiv 2\mod 4$ and $k=\frac{n+1}{2}$ otherwise. This is an adjacent node because the symmetric difference of the graphs of the two tours has a unique alternating cycle $(2,1,3,n,n-1,4,5,n-2,n-3,6,7,\ldots k)$. The precise end of this alternating cycle depends on $n\mod 4$. If $n\equiv 0\mod 4$, the ending of this cycle is  $(\ldots,k-2,k-1,k+2,k+1,k)$. If $n\equiv 2\mod 4$, it is $(\ldots,k-2,k-1,k+4,k+3,k)$. For $n\equiv 1\mod 4$, it is $(\ldots,k-2,k+4,k+3,k-1,k,k+2,k+1,k)$ and for $n\equiv3\mod 4$ it is $(\ldots,k+2,k+1,k+3,k+2,k)$. Because $x^2_{1,2}=0$, this is an improving step for the lexicographic order on the edges. Now use the alternating cycle $(2,3,1,4,n-3,n-4,5,6,n-5,n-6,\ldots,k)$ to get to the neighbor tour $x^3=(1,4,5,\ldots,n-2,3,2,n-1,n)$. More precisely, the ending of the alternating cycle is $(\ldots,k-3,k-2,k+1,k)$ if $n\equiv 0\mod 4$, $(\ldots,k-2,k+3,k+2,k-1,k)$ if $n\equiv 2\mod 4$, $(\ldots,k-2,k-1,k+1,k)$ if $n\equiv1\mod 4$ and $(\ldots,k-2,k+2,k+1,k-1,k)$ if $n\equiv3\mod 4$. This is also an improving step because $x_{1,2}^3=x_{1,3}^3=0$.

Now we fix $x_{n-1,2}=x_{2,3}=x_{3,n-2}=1$. We get to optimal tour of this facet (see Figure \ref{long_path_TSP}b)) in at least $L_{n-3}$ steps, similarly to the technique used for the $L_{n-1}$ long step: in the $n$-tour, nodes $n-1,2,3$ and $n-2$ will be merged together to obtain a $(n-3)$-tour.

Step 3:
Note that now $\{1,n\}$ and $\{1,n-1\}$ are edges that will never be removed so we are restricted to the facet $x_{1,n}=x_{1,n-1}=1$. Merging together nodes $1,n,n-1$, the resulting $(n-2)$-tour is exactly the tour given at the end of step 1 for $n-2$ nodes. Apply Step 2 again to get to the next tour in at least $2+L_{n-5}$ steps which is the optimum of the facet $x_{1,n}=x_{1,n-1}=x_{n-2,3}=x_{3,4}=x_{4,n-3}=1$ (see Figure \ref{long_path_TSP}c)).
Now, $\{2,n\}$ and $\{2,n-2\}$ are edges that will never be removed so we are restricted to the facet $x_{2,n}=x_{2,n-2}=1$. With the same arguments, we progressively reconstruct the edges of the optimum $x^*$ in at least $2+L_{n-7}+2+L_{n-9}+\ldots$  steps.\\

\begin{figure}

\centering
\includegraphics[width=13cm]{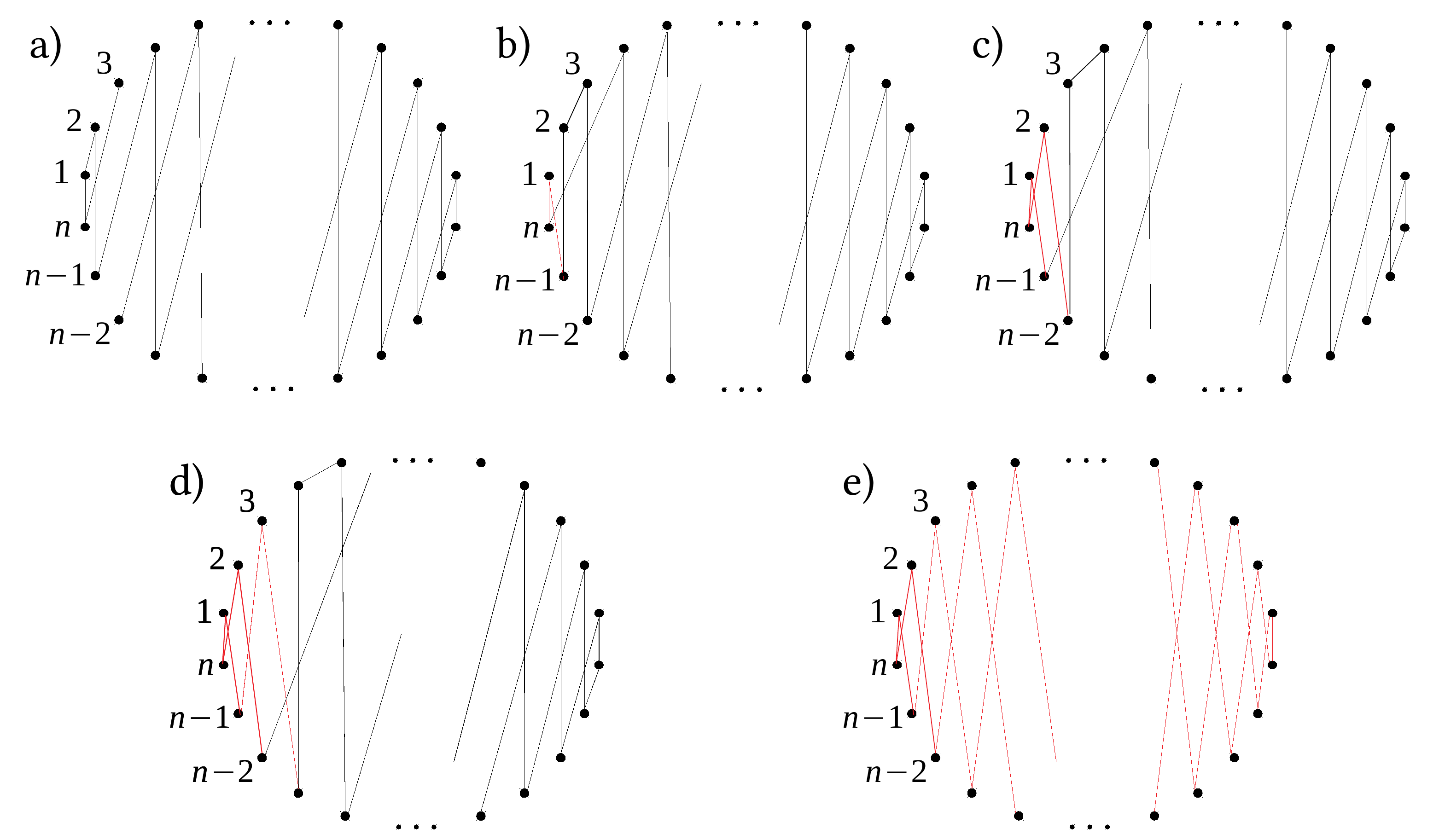}
\caption{Main steps of the monotone path. Once the red edges that belong to the optimum $e)$ are inserted they will never be deleted.}
\label{long_path_TSP}
\end{figure}

Together, we have $L_n \geq L_{n-1}+2+L_{n-3}+2+L_{n-5}+\ldots + 2+L_{k}$ with $k=4$ if $n$ is odd and $k=5$ otherwise. Define $\tilde L_n$ by $\tilde L_4=1$, $\tilde L_5=3$ and $\tilde L_n = \tilde L_{n-1}+2+\tilde L_{n-3}+2+\tilde L_{n-5}+\ldots + 2+\tilde L_{k}$. Then $L_n\geq \tilde L_n$ because 
$L_4\geq1$ and $L_5\geq 3$. Furthermore, $\tilde L_n=\tilde L_{n-1}+\tilde L_{n-2}+2$ therefore note that $\tilde L_n+2 = F_n$ is the Fibonacci sequence. Then $L_n\geq\tilde L_n \geq \frac{\phi^n}{\sqrt 5}-3$.
\end{proof}


\begin{proof}[Proof of Theorem \ref{long_path_shortestpathpolytope}]
Recall that the vertices of the shortest path polytope are the paths from node say $1$ to $n$ (all nodes of the path should be distinct) and that two paths from $1$ to $n$ are adjacent if and only if the union of their graphs forms a unique cycle. Denote by $x_{i,j}$ the coordinate of the edge going from node $i\neq n$ to $j\neq 1$ in a vertex $x$ of the polytope. Similarly to the cost function used in Theorem \ref{long_path_tsp} we use the linear function
\begin{equation*}
\psi = x_{1,2}+\alpha x_{1,3} + \ldots + \alpha^{n-2} x_{1,n}+\alpha^{n-1} x_{2,3}+\ldots+\alpha^{2n-4}x_{2,n}+\ldots+\alpha^{n^2-3n+2}x_{n-1,n},
\end{equation*}
so that the linear order is the lexicographic order on the edges\\ $\{1,2\},\{1,3\},\ldots,\{1,n\},\{2,3\},\ldots,\{2,n\},\{3,2\},\{3,4\},\ldots,\{3,n\},\ldots,\{n-1,n\}$ with a chosen small enough $\alpha>0$. We start from the path $1,2,\ldots,n$ which is the maximum value vertex for $\psi$. Denote by $L_n$ the length of the monotone path we will construct here by induction.

Step 1: Fix the edge $x_{1,2}=1$. This facet corresponds to the shortest path polytope on the complete graph $K_{n-1}$ with nodes $2,3,\ldots,n$. The objective function $\psi$ is still the same lexicographic order on the edges of $K_{n-1}$. Then, by induction, we can get to path $1,2,n$ in $L_{n-1}$ monotone steps. 

Step 2: We now get to the path $1,3,4,\ldots,n$ which is a decreasing neighbor because we do not use the edge $\{1,2\}$ anymore. Similarly to Step 1, we get to path $1,3,n$ in $L_{n-2}$ monotone steps.

\begin{figure}
    \centering
    \includegraphics[width=15cm]{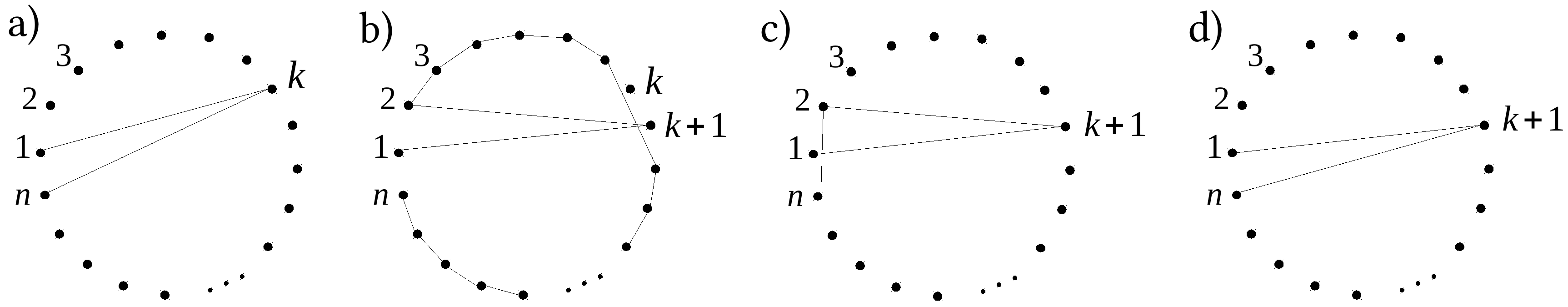}
    \caption{Step 3 of the long monotone path on the shortest path polytope. The length of the path from a) to d) is $L_{n-3}+2$.}
    \label{shortest_path_fig_step3}
\end{figure}

Step 3: We are now going to go from path $1,3,n$ to $1,4,n$, then to $1,5,n$ etc... to $1,n-1,n$ (see Figure \ref{shortest_path_fig_step3}). From the path $1,k,n$ where $k\geq 3$ we get to the decreasing neighbor $1,k+1,2,3,\ldots,k-1,k+2,k+3,\ldots,n$ (see Figure \ref{shortest_path_fig_step3} b)). Fixing edges $x_{1,k+1}=x_{k+1,2}=1$, this facet is equivalent to the shortest path on the complete graph $K_{n-3}$ with nodes $2,3,\ldots,k-1,k+2,k+3,\dots,n$, starting in $2$ and ending in $n$. We therefore get to path $1,k+1,2,n$ in $L_{n-3}$ steps and then to path $1,k+1,n$ in an improving step. We can repeat this operation $n-4$ times until we reach path $1,n-1,n$. We finally get to path $1,n$ in one improving step.
All together we get 
\begin{equation*}
L_n = L_{n-1}+L_{n-2}+(n-4)L_{n-3} + 2(n-3)\geq (n-2)L_{n-3}.
\end{equation*}
Therefore $L_{3k+2}\geq 3^k \cdot k!$ and $L_{3k+1},L_{3k}\geq 3^{k-1} \cdot (k-1)!$ where $3^k \cdot k!\sim \frac{\tilde C}{k^{1/3}} \sqrt[3]{(3k)!}$ for some constant $\tilde C$. The result follows.
\end{proof}

Although the height of all the combinatorial polytopes above is exponential, several authors have shown that their monotone diameter can be short. For example Rispoli \cite{rispoli1992} showed that the monotone diameter of the Birkhoff polytope of vertices in $\mathcal{S}_n$ is $\lfloor \frac{n}{2} \rfloor$. Furthermore, he also proved that several matching polytopes \cite{rispoli1994}, the shortest path polytope \cite{rispoli1992} and the TSP \cite{rispoli1998} have linear monotone diameter. 

We now give estimates for their pivot height for some specific pivot rules. For this we use an analysis of the number of basic feasible solutions (BFS) generated by the algorithm. The ideas we use are inspired from the work of Kitahara, Mizuno and co-authors (see \cite{kitahara2012}, \cite{KitaharaMizuno2013} and \cite{Tano}).\\


Consider the following linear program in canonical form for a bounded polytope:

\begin{eqnarray} \label{eq:LP}
	\min && c^Tx \\
	s.t. && Ax=b,\quad x\geq 0 \nonumber
\end{eqnarray}
where $A\in\R^{m\times n}$, $m<n$ and $A$ is a matrix with full row rank.

For a given BFS $x$, let $B$ and $N$ denote the submatrices of $A$ corresponding to basic and non-basic columns respectively.  We split the objective function vector $c$ and the variables $x$ accordingly,

\begin{equation*}
	c=\begin{bmatrix}
		c_B \\ c_N
	\end{bmatrix},\quad x=\begin{bmatrix}
		x_B \\ x_N
	\end{bmatrix},\quad x_B=B^{-1}b,\quad x_N=0.
\end{equation*}

\begin{definition}
Define $\gamma$ and $\delta$ respectively as the maximum and the minimum among the positive coordinates of all BFS. 
We also denote by $\nu$ and $\mu$ respectively the maximum and minimum among the $\ell_2$-norms of all possible edges.
\end{definition}

In the paper \cite{KitaharaMizuno2013} Kitahara and Mizuno proved that for Dantzig's pivot rule and the greatest  
(descent) improvement pivot rule, the number of steps is bounded by $n\left[\frac{m\gamma}{\delta}\log\left(m\frac{\gamma}{\delta}\right)\right]$ 
iterations. In \cite{kitahara2012} Kitahara, Matsui and Mizuno improved that result and obtained the following bound: $(n-m)\left[\min\{m,n-m\}\frac{\gamma}{\delta}\log\left(\min \{m,n-m\}\frac{\gamma}{\delta}\right)\right]$. Here we provide a new bound for another very popular pivot rule, the \emph{steepest edge} pivot rule. 
We remark that this bound is in general still exponential in the bit-size of the
input, and that the constants are complicated to compute. For example, $\delta$ is NP-hard to compute in general (see \cite{Kuno+Sano2018}).

Consider now a single step of steepest edge pivoting rule for the Simplex method.   To simplify the argument, we assume that the current basis 
consists of the first $m$ columns. If column $q$ ($q>m$) is entering the basis and the column $p$ is leaving the basis, then the next BFS $\bar{x}$ we encounter would be of the form
$$
	\bar{x}=x+\theta\eta_q
$$
where $\theta$ is the length of the step, and $\eta_N^q$ is one from the set of edge directions $\eta_N=[\eta_N^{m+1},...,\eta_N^{n}]$,
$$
	\eta_N^q = \begin{bmatrix}
	-B^{-1}N\\ I
	\end{bmatrix} e_{q-m}.
$$
Let $\bar c_N$ denote the reduced cost vector for non-basic variables, so
\begin{equation}
	\bar c_N^q = c^T\eta_N^q, \quad \bar c_N^T=c^T\eta_N=c_N^T-c_B^TB^{-1}N.  
\end{equation}
Denote by $\zeta_N^q$ the Euclidean norm of $q-$th edge, and $W_N$ a diagonal matrix whose diagonal elements are $\zeta_N^q$.
$$
\zeta_N^q=\|\eta_N^q\|_2,\quad W_N=diag(\zeta_N^{m+1},...,\zeta_N^n).
$$ 
In the steepest edge Simplex algorithm, we determine our pivoting column by minimizing the normalized reduced cost i.e., choosing $\hat{q}$ such that
$$
\hat{q}=\arg\min \bar c_N^q/\zeta_N^q.
$$
Set $\Lambda=-\bar c_N^{\hat{q}}/\zeta_N^{\hat{q}}>0$. With all the notations above, Problem \ref{eq:LP} can be rewritten as
\begin{eqnarray} \label{eq:LP2}
	\min_{x_N} && c_B^TB^{-1}b+\bar c_N^TW_N^{-T}W_Nx_N. \\
	s.t. && x_B=B^{-1}b-B^{-1}Nx_n, \nonumber\\
	&&x_B\geq 0,\quad x_N\geq 0. \nonumber
\end{eqnarray}
Note that $W_N^{-1} \bar c_N$ is the normalized reduced cost vector.\\


Lemma 1 of \cite{KitaharaMizuno2013} gives an upper bound on the distance between the current objective value and the optimal value. The following lemma is an extension for the steepest edge pivoting rule. 

\begin{lemma}
    \label{inequality_delta}
Assume $z^*$ is the optimal value and $x^{(t)}$ the BFS generated at the $t-th$ iteration, with the corresponding basic and non-basic columns $B^{(t)}, N^{(t)}$. Then we have
	\begin{eqnarray*}
	z^*\geq c^Tx^{(t)}-\Lambda^{(t)} m\nu \gamma.
	\end{eqnarray*}
\end{lemma}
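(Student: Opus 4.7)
The plan is to mirror the Kitahara--Mizuno argument but replace their use of the plain reduced cost with the normalized reduced cost chosen by the steepest edge rule. Fix an optimal BFS $x^*$. The key identity is to express the gap $x^* - x^{(t)}$ as a nonnegative combination of the edge directions at $x^{(t)}$: since $A x^* = b = A x^{(t)}$ gives $x^*_B = x^{(t)}_B - (B^{(t)})^{-1} N^{(t)} x^*_N$, we have
\begin{equation*}
    x^* - x^{(t)} \;=\; \sum_{q \in N^{(t)}} x^*_q\, \eta_N^q.
\end{equation*}
Taking inner product with $c$ and recalling $\bar c_N^q = c^T \eta_N^q$ yields
\begin{equation*}
    z^* - c^T x^{(t)} \;=\; \sum_{q \in N^{(t)}} x^*_q\, \bar c_N^q.
\end{equation*}

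Next I would invoke the defining property of the steepest edge pivoting rule. Since $\hat q$ minimizes $\bar c_N^q / \zeta_N^q$ and $\Lambda^{(t)} = -\bar c_N^{\hat q}/\zeta_N^{\hat q}$, every nonbasic index satisfies $\bar c_N^q / \zeta_N^q \geq -\Lambda^{(t)}$, hence $\bar c_N^q \geq -\Lambda^{(t)} \zeta_N^q$. Substituting into the sum above gives
\begin{equation*}
    z^* - c^T x^{(t)} \;\geq\; -\Lambda^{(t)} \sum_{q \in N^{(t)}} x^*_q\, \zeta_N^q.
\end{equation*}

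Finally, I bound the sum on the right using the definitions of $\gamma$ and $\nu$. Each edge norm satisfies $\zeta_N^q \leq \nu$, while $x^*$, being a BFS, has at most $m$ strictly positive coordinates, each of magnitude at most $\gamma$; in particular $x^*_q \leq \gamma$ for every $q \in N^{(t)}$ and at most $m$ of these terms are nonzero. Hence $\sum_{q \in N^{(t)}} x^*_q \zeta_N^q \leq m \nu \gamma$, which combined with the previous inequality yields the claimed bound $z^* \geq c^T x^{(t)} - \Lambda^{(t)} m \nu \gamma$.

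The only delicate point is bookkeeping the signs and the role of $\zeta_N^q$: unlike the original Kitahara--Mizuno bound, where the worst reduced cost is bounded directly, the steepest edge rule only controls the \emph{normalized} reduced cost, so the edge-norm factor $\nu$ necessarily enters the estimate. Once this is recognized the rest is the same convex combination / BFS-support argument as in \cite{KitaharaMizuno2013}; no new structural obstacle arises.
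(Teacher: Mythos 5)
Your proof is correct and takes essentially the same route as the paper: decompose $z^* - c^T x^{(t)} = \sum_{q\in N^{(t)}} x^*_q\,\bar c_N^q$, bound each reduced cost below by $-\Lambda^{(t)}\zeta_N^q$ using the steepest-edge normalization, and then use that the BFS $x^*$ has at most $m$ positive coordinates, each at most $\gamma$, with every edge norm at most $\nu$. If anything your treatment of the factor $m$ is a touch more careful than the paper's intermediate step (which writes $e^T W_{N^{(t)}} e\le m\nu$, a sum over all $n-m$ nonbasic indices); the BFS-support argument you give is the cleaner way to obtain it.
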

\begin{proof}
We decompose the optimal value $z^*$ with the current basis.
	\begin{eqnarray*}
		z^*&=&c^Tx^*\\
			&=&c^Tx^{(t)}+\bar c_{ N^{(t)}}^Tx_{ N^{(t)}}^* \\ 
			 & = & c^Tx^{(t)}+\bar c_{ N^{(t)}}^TW_{  N^{(t)}}^{-T} W_{N^{(t)}}x_{ N^{(t)}}^*.
	\end{eqnarray*}
Using the definition of $\Lambda^{(t)}$	we get
	\begin{eqnarray*}
		z^*&\geq & c^Tx^{(t)}-\Lambda^{(t)} e^T W_{ N^{(t)}}x_{ N^{(t)}}^*\\
		& \geq & c^Tx^{(t)}-\Lambda^{(t)} \left(e^T W_{ N^{(t)}} e\right) \gamma\\
			 & \geq & c^Tx^{(t)}-\Lambda^{(t)} m\nu\gamma,
	\end{eqnarray*}
	where the last inequality results from the definition of $\nu$.
\end{proof}

The following theorem shows the decreasing rate of the gap between the optimal value and the objective value at iteration $t$.
\begin{theorem}
	\label{steepest_edge_algo}
For the steepest edge pivoting rule, if the $t$-th iterate $x^{(t)}$ is not optimal then
\begin{eqnarray*}
	\frac{c^Tx^{(t+1)}-z^* }{c^Tx^{(t)}-z^*}  \leq 1-\frac{\mu\delta}{m\nu\gamma}.
	\end{eqnarray*}
\end{theorem}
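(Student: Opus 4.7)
The plan is to follow the Kitahara--Mizuno template used in their Dantzig/greatest-improvement bound, adapted to the steepest-edge normalization that has already been set up via $\Lambda^{(t)} = -\bar c_{N^{(t)}}^{\hat q}/\zeta_{N^{(t)}}^{\hat q}$. The key idea is to (i) lower-bound the one-step decrease $c^T x^{(t)} - c^T x^{(t+1)}$ in terms of $\Lambda^{(t)}$, and (ii) use Lemma \ref{inequality_delta} to lower-bound $\Lambda^{(t)}$ in terms of the current gap $c^T x^{(t)} - z^*$. Multiplying these gives the desired contraction.

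For (i), write the next iterate as $x^{(t+1)} = x^{(t)} + \theta \eta_{N^{(t)}}^{\hat q}$ with step length $\theta>0$. The change in objective is
\begin{equation*}
c^T x^{(t)} - c^T x^{(t+1)} = -\theta\, \bar c_{N^{(t)}}^{\hat q} = \theta\, \Lambda^{(t)}\, \zeta_{N^{(t)}}^{\hat q}.
\end{equation*}
Now I would bound the two positive factors $\theta$ and $\zeta_{N^{(t)}}^{\hat q}$ from below using the definitions in the preceding definition block. After the pivot, the entering variable $\hat q$ becomes basic with value exactly $\theta$, which is a positive coordinate of the BFS $x^{(t+1)}$; hence $\theta \geq \delta$. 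Also $\zeta_{N^{(t)}}^{\hat q}$ is the Euclidean length of the edge actually traversed, so $\zeta_{N^{(t)}}^{\hat q} \geq \mu$. Combining,
\begin{equation*}
c^T x^{(t)} - c^T x^{(t+1)} \;\geq\; \mu\, \delta\, \Lambda^{(t)}.
\end{equation*}

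For (ii), Lemma \ref{inequality_delta} directly rearranges to
\begin{equation*}
\Lambda^{(t)} \;\geq\; \frac{c^T x^{(t)} - z^*}{m\, \nu\, \gamma}.
\end{equation*}
Plugging this into the previous inequality gives
\begin{equation*}
c^T x^{(t)} - c^T x^{(t+1)} \;\geq\; \frac{\mu\, \delta}{m\, \nu\, \gamma}\bigl(c^T x^{(t)} - z^*\bigr),
\end{equation*}
and subtracting $z^*$ from both sides of $c^T x^{(t+1)} \leq c^T x^{(t)} - \frac{\mu \delta}{m \nu \gamma}(c^T x^{(t)} - z^*)$ yields the claimed ratio bound.

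The only delicate step is the lower bound $\theta \geq \delta$, which silently invokes nondegeneracy: in a nondegenerate LP the entering variable indeed attains a strictly positive value in the new BFS, and this value is $\theta$. Since the paper works throughout with a nondegenerate objective $c$ (so that $G(A,b,c)$ is a well-defined oriented graph), this assumption is consistent with the rest of the analysis; I would note this explicitly. Everything else is manipulation of the definitions already given, and no macro or hypothesis outside the excerpt is needed.
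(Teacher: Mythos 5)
Your proof is essentially identical to the paper's: the paper computes $c^Tx^{(t)}-c^Tx^{(t+1)}=\Lambda^{(t)}\zeta_{N^{(t)}}^{\hat{q}^{(t)}}x_{\hat{q}^{(t)}}^{(t+1)}\geq\Lambda^{(t)}\mu\delta$ and then invokes Lemma \ref{inequality_delta}, which is precisely your steps (i) and (ii) with $\theta=x_{\hat{q}^{(t)}}^{(t+1)}$. One small inaccuracy in your wording: $\zeta_{N^{(t)}}^{\hat q}=\|\eta_{N^{(t)}}^{\hat q}\|_2$ is the norm of the edge \emph{direction}, not of the edge actually traversed (which is $\theta\,\eta_{N^{(t)}}^{\hat q}$), though this matches the paper's own loose usage of $\mu,\nu$ and does not affect the argument.
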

\begin{proof}
	\begin{eqnarray*}
		c^Tx^{(t)}-c^Tx^{(t+1)}&=&\Lambda^{(t)}\zeta_{N^{(t)}}^{\hat{q}^{(t)}}x_{\hat{q}^{(t)}}^{(t+1)} \\
		&\geq & \Lambda^{(t)} \mu\delta\\
		& \geq & \frac{\mu\delta}{m\nu\gamma} (c^Tx^{(t)}-z^*).
	\end{eqnarray*}
	The last inequality follows from Lemma \ref{inequality_delta}. Rearranging the terms gives us the desired result.
\end{proof}
Lemma 2 in the original paper \cite{KitaharaMizuno2013} does not depend on pivoting rules, so it can be applied directly here.
\begin{lemma}(Kitahara and Mizuno, Lemma 2 in \cite{KitaharaMizuno2013}) 
\label{lemma2kitahara}
	If $x^{(t)}$ is not optimal, then there exists $\bar{j}\in B^t$, such that $x^{(t)}>0$, and for any $k$, $x^{(k)}$ satisfies
 	$$
 		x_{\bar{j}}^{(k)}\leq \frac{m(c^Tx^{(k)}-z^*) }{c^Tx^{(t)}-z^*} x_{\bar{j}}^{(t)}.
	$$
\end{lemma}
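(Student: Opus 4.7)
The plan is to bound $x^{(k)}_{\bar j}$ via the complementary-slackness decomposition of the duality gap, using a \emph{fixed} dual-optimal reduced-cost vector as the yardstick. The advantage of this choice is that it expresses the gap as a nonnegative linear form uniformly over all iterates, independent of the pivot rule; this is precisely why Lemma~2 of \cite{KitaharaMizuno2013} applies verbatim to the steepest-edge paths analyzed above.

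First I would fix an optimal dual solution $y^{*}$ for (\ref{eq:LP}) and set $\bar c^{*} := c - A^{T} y^{*}$. By LP duality $\bar c^{*} \geq 0$ and $b^{T} y^{*} = z^{*}$, and by complementary slackness $\bar c^{*}_{j} x^{*}_{j} = 0$ for any optimal primal $x^{*}$. Using $Ax = b$, for every feasible $x$ the gap factors as
\[
  c^{T} x - z^{*} \;=\; (c - A^{T} y^{*})^{T} x \;=\; (\bar c^{*})^{T} x,
\]
which is a sum of nonnegative terms because $x \geq 0$.

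Next, specialize to $x = x^{(t)}$. The only nonzero contributions to $(\bar c^{*})^{T} x^{(t)}$ come from the (at most $m$) indices $j \in B^{(t)}$ with $x^{(t)}_{j} > 0$, so averaging produces some $\bar j \in B^{(t)}$ with
\[
  \bar c^{*}_{\bar j} \, x^{(t)}_{\bar j} \;\geq\; \frac{c^{T} x^{(t)} - z^{*}}{m} \;>\; 0,
\]
forcing both $x^{(t)}_{\bar j} > 0$ and $\bar c^{*}_{\bar j} > 0$. Applying the same identity at $x = x^{(k)}$ and discarding all terms except the $\bar j$-th one gives $c^{T} x^{(k)} - z^{*} \geq \bar c^{*}_{\bar j} \, x^{(k)}_{\bar j}$. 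Eliminating $\bar c^{*}_{\bar j}$ between these two displayed inequalities yields exactly the bound asserted by the lemma.

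The only conceptual hurdle is spotting that the right quantity to pigeonhole is $\bar c^{*}_{j} x^{(t)}_{j}$, with $\bar c^{*}$ coming from a \emph{dual-optimal} (not merely current) solution: this is what converts an upper bound on the aggregate gap into a lower bound on a single reduced-cost-times-coordinate product, which is then propagated to every future iterate through the same nonnegative decomposition. Since no step of the argument invokes any particular pivot rule, the lemma combines cleanly with Theorem~\ref{steepest_edge_algo} to deliver the steepest-edge iteration bound advertised above.
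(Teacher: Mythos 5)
Your proof is correct. Note that the paper does not give its own proof of this lemma---it is cited directly from Kitahara and Mizuno \cite{KitaharaMizuno2013}---but your argument is a faithful reconstruction of theirs: their proof likewise fixes the optimal reduced-cost vector $\bar c^* = c - A^T y^*$ (with $y^* = (B^*)^{-T}c_{B^*}$ for an optimal basis $B^*$), writes the gap at any iterate as the nonnegative sum $(\bar c^*)^T x$, pigeonholes over the at most $m$ basic variables at iteration $t$ to isolate a coordinate $\bar j$ with $\bar c^*_{\bar j} x^{(t)}_{\bar j} \geq (c^T x^{(t)} - z^*)/m$, and then bounds $x^{(k)}_{\bar j}$ for every $k$ by dropping the remaining nonnegative terms in $(\bar c^*)^T x^{(k)}$. (Incidentally, the statement in the paper contains a typo: ``$x^{(t)}>0$'' should read ``$x^{(t)}_{\bar j}>0$'', which is precisely what your argument establishes.)
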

Combining the results from Theorem \ref{steepest_edge_algo} and Lemma \ref{lemma2kitahara}, we have the following lemma.
\begin{lemma}
\label{index_exponential_decrease}
	If $x^{(t)}$ is not an optimal solution, then there exists $\bar{j}\in B^t$, such that $x_{\bar{j}}^{(t)}>0$ and becomes zero and stays zero after $\left[\frac{m\gamma\nu}{\delta\mu}log\left(m\frac{\gamma}{\delta}\right)\right]$ iterations.
\end{lemma}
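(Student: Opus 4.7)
The plan is to combine the geometric decrease rate from Theorem \ref{steepest_edge_algo} with the per-coordinate bound from Lemma \ref{lemma2kitahara}, exactly in the spirit of the original Kitahara--Mizuno argument, and then invoke the fact that every positive BFS coordinate is bounded below by $\delta$.

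First I would iterate Theorem \ref{steepest_edge_algo} starting from $x^{(t)}$. Writing $\rho = 1 - \frac{\mu\delta}{m\nu\gamma} \in (0,1)$, this gives, for every $k \geq t$,
\begin{equation*}
\frac{c^T x^{(k)} - z^*}{c^T x^{(t)} - z^*} \;\leq\; \rho^{\,k-t}.
\end{equation*}
Next I would apply Lemma \ref{lemma2kitahara} at the iterate $x^{(t)}$ to obtain an index $\bar j \in B^{(t)}$ with $x^{(t)}_{\bar j} > 0$ such that, for every $k \geq t$,
\begin{equation*}
x^{(k)}_{\bar j} \;\leq\; \frac{m\bigl(c^T x^{(k)} - z^*\bigr)}{c^T x^{(t)} - z^*}\, x^{(t)}_{\bar j} \;\leq\; m\,\rho^{\,k-t}\,x^{(t)}_{\bar j} \;\leq\; m\,\rho^{\,k-t}\,\gamma,
\end{equation*}
using $x^{(t)}_{\bar j} \leq \gamma$ by definition of $\gamma$.

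Now I would look for the smallest $k-t$ making the right-hand side strictly less than $\delta$. Solving $m\gamma\rho^{\,k-t} < \delta$ and using $\log(1/\rho) \geq \tfrac{\mu\delta}{m\nu\gamma}$ (from $\log(1-x)\le -x$), it suffices to take
\begin{equation*}
k - t \;\geq\; \frac{m\nu\gamma}{\mu\delta}\log\!\left(\frac{m\gamma}{\delta}\right),
\end{equation*}
i.e.\ $k-t = \left\lceil \tfrac{m\gamma\nu}{\delta\mu}\log(m\gamma/\delta)\right\rceil$ steps suffice. At any such iterate, $x^{(k)}_{\bar j} < \delta$; but by the definition of $\delta$, any positive coordinate of a BFS is $\geq \delta$, so this forces $x^{(k)}_{\bar j} = 0$.

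The only subtlety — and the piece I would want to be careful about — is the ``stays zero'' part of the claim. This is essentially free from the uniform bound above: for every $k' \geq k$, Lemma \ref{lemma2kitahara} still gives $x^{(k')}_{\bar j} \leq m\,\rho^{\,k'-t}\,\gamma$, which is monotone decreasing in $k'$ and therefore remains $< \delta$. Hence $x^{(k')}_{\bar j}$ must remain equal to $0$ for all subsequent iterations, completing the proof.
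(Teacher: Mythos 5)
Your proof is correct and follows essentially the same route as the paper: combine the geometric rate from Theorem~\ref{steepest_edge_algo} with the per-coordinate bound in Lemma~\ref{lemma2kitahara}, bound $x^{(t)}_{\bar j}\leq\gamma$, use $1-x\le e^{-x}$ (equivalently $\log(1-x)\le -x$) to solve for $k$, and conclude from the definition of $\delta$. You additionally spell out the ``stays zero'' part explicitly, which the paper leaves implicit in the fact that the upper bound $m\gamma\exp\left(-\tfrac{k\mu\delta}{m\nu\gamma}\right)$ is decreasing in $k$; this is a minor but welcome clarification.
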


\begin{proof}
	\begin{eqnarray*}
	x_{\bar{j}}^{(t+k)}\leq m\left(1-\frac{\mu\delta}{m\nu\gamma}\right)^k x_{\bar{j}}^{(t)} \leq m\gamma\left(1-\frac{\mu\delta}{m\nu\gamma}\right)^k \leq m\gamma \exp\left(-\frac{k\mu\delta}{m\nu\gamma}\right).
	\end{eqnarray*}
	Therefore, if $k>\left[\frac{m\gamma\nu}{\delta\mu}log \left(m\frac{\gamma}{\delta}\right)\right]$, we would have $x_{\bar{j}}^{(t+k)}<\delta$. By the definition of $\delta$, the lemma follows.
\end{proof}
The event described in Lemma \ref{index_exponential_decrease} can happen at most once for each variable. Since we have in total $n$ variables, we have the following theorem.
\begin{theorem}
\label{theorem_steepest_edge}
Using steepest edge algorithm to solve Problem (\ref{eq:LP}) would generate at most 
\begin{equation}\label{estimation}
n\left[\frac{m\gamma\nu}{\delta\mu} \log\left(m\frac{\gamma}{\delta}\right)\right]
\end{equation}
 different BFS. In other words, the algorithm would reach the optimal solution in at most $n\left[\frac{m\gamma\nu}{\delta\mu}\log\left(m\frac{\gamma}{\delta}\right)\right]$ non-degenerate pivots.
\end{theorem}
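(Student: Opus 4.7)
The proof will be a direct bookkeeping argument on top of Lemma \ref{index_exponential_decrease}. The plan is to track, along the trajectory of the steepest-edge Simplex method, how often the ``drop event'' guaranteed by that lemma can occur, and show that this caps the total number of pivots.

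First I would restate the content of Lemma \ref{index_exponential_decrease} in a form suitable for iteration: for every non-optimal basic feasible solution $x^{(t)}$, there is some index $\bar{j} \in B^{(t)}$ with $x^{(t)}_{\bar{j}} > 0$ such that, after at most $K := \left[\frac{m\gamma\nu}{\delta\mu}\log(m\gamma/\delta)\right]$ further iterations, the coordinate $x_{\bar{j}}$ has fallen below $\delta$ and therefore, by the very definition of $\delta$ as the minimum positive coordinate over all BFS, has to equal $0$ and remain $0$ for the rest of the run.

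Next I would argue by counting. Define a variable index to be \emph{extinguished} at step $t+K$ if the index $\bar{j}$ produced by Lemma \ref{index_exponential_decrease} at step $t$ becomes zero and stays zero forever afterwards. Because ``stays zero'' is monotone in $t$, each of the $n$ variables can be extinguished at most once throughout the entire run. Partition the trajectory into at most $n$ consecutive blocks, each ending at the moment a new index gets extinguished. By the lemma, every such block has length at most $K$, so the total number of iterations before optimality is at most $nK = n\left[\frac{m\gamma\nu}{\delta\mu}\log(m\gamma/\delta)\right]$, which yields the claimed bound on the number of distinct BFS visited.

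I do not anticipate a real obstacle: the entire difficulty has already been absorbed into Theorem \ref{steepest_edge_algo} and Lemma \ref{index_exponential_decrease}, which package the geometric decrease of the optimality gap specific to the steepest-edge rule together with the Kitahara--Mizuno-style estimate on individual coordinates. The only subtlety worth flagging is the transition from ``$x_{\bar{j}}^{(t+k)} < \delta$'' to ``$x_{\bar{j}}^{(t+k)} = 0$ permanently'', which relies both on non-degeneracy of the pivots (so $\delta$ is well-defined as a strict positive minimum) and on the fact that once a basic coordinate has value strictly less than $\delta$ on some BFS, no subsequent BFS can restore it to a positive value without contradicting the definition of $\delta$. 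Once that point is made explicit, the counting argument in the previous paragraph immediately gives Theorem \ref{theorem_steepest_edge}.
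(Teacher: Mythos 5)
Your proof is correct and follows essentially the same argument as the paper: Lemma \ref{index_exponential_decrease} guarantees that from any non-optimal iterate some basic variable is permanently extinguished within $K=\left[\frac{m\gamma\nu}{\delta\mu}\log\left(m\frac{\gamma}{\delta}\right)\right]$ further steps, this event can occur at most once per each of the $n$ variables, and hence the run lasts at most $nK$ pivots.

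One small caveat on your ``subtlety'' paragraph: the reason $x_{\bar j}^{(t+k)}$ stays at zero for all $k>K$ is \emph{not} that a coordinate once below $\delta$ can never later be positive again (a later BFS could perfectly well have $x_{\bar j}\geq\delta$ without contradicting the definition of $\delta$); rather, the bound $x_{\bar j}^{(t+k)}\leq m\gamma\exp\left(-\frac{k\mu\delta}{m\nu\gamma}\right)$ from Lemma \ref{lemma2kitahara} combined with Theorem \ref{steepest_edge_algo} holds for \emph{every} $k$, so the estimate itself forces $x_{\bar j}^{(t+k)}<\delta$ (hence $=0$) simultaneously for all $k>K$. Since you only invoke Lemma \ref{index_exponential_decrease} as a black box, this mischaracterization does not affect the validity of your counting argument for the theorem.
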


Here is another bound in terms of the sub-determinants of the input matrix $A$. In the following, we will denote by $\Delta$ and $\lambda$ respectively the maximum and minimum absolute value of non-zero determinants over the $m\times m$ sub-matrices of $A$.

\begin{lemma}
\label{subdeterminant_bound}
For any $m\times m$ sub-matrix $B$ of $A$ and any column $A_k$ of the matrix $A$, $\| B^{-1}A_k \|_2 \leq \sqrt{m}\frac{\Delta}{\lambda}$.
\end{lemma}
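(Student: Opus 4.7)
The plan is to use Cramer's rule to express each coordinate of $B^{-1}A_k$ as a ratio of $m\times m$ sub-determinants of $A$, and then control the numerator and denominator separately using the definitions of $\Delta$ and $\lambda$.

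First, I would set $y = B^{-1}A_k \in \R^m$, so that $By = A_k$. Since $B$ is an invertible $m\times m$ sub-matrix of $A$, its determinant is a non-zero $m\times m$ sub-determinant of $A$, hence $|\det(B)| \geq \lambda$ by definition of $\lambda$. Next, I would apply Cramer's rule to write
\begin{equation*}
y_i \;=\; \frac{\det(B^{(i)})}{\det(B)}, \qquad i=1,\ldots,m,
\end{equation*}
where $B^{(i)}$ is the matrix obtained from $B$ by replacing its $i$-th column with $A_k$. The crucial observation is that $B^{(i)}$ is itself an $m\times m$ sub-matrix of $A$, since we have simply exchanged one column of $A$ (the $i$-th column of $B$) for another column of $A$ (namely $A_k$). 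Therefore $|\det(B^{(i)})| \leq \Delta$ by the definition of $\Delta$.

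Combining these two bounds yields $|y_i| \leq \Delta/\lambda$ for every $i$, so that $\|y\|_\infty \leq \Delta/\lambda$. Finally, the standard inequality $\|y\|_2 \leq \sqrt{m}\,\|y\|_\infty$ gives
\begin{equation*}
\|B^{-1}A_k\|_2 \;\leq\; \sqrt{m}\,\frac{\Delta}{\lambda},
\end{equation*}
which is the desired conclusion. There is no real obstacle here: the argument is a direct application of Cramer's rule combined with the definitions of $\Delta$ and $\lambda$, the only subtlety being the observation that each Cramer matrix $B^{(i)}$ remains an $m\times m$ sub-matrix of $A$ and so its determinant is controlled by $\Delta$.
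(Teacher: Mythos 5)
Your argument is correct and is essentially identical to the paper's proof: Cramer's rule expresses each entry of $B^{-1}A_k$ as a ratio $\det(B^{(i)})/\det(B)$ of $m\times m$ sub-determinants of $A$, giving the bound $\Delta/\lambda$ entrywise, and the $\sqrt{m}$ factor comes from passing from the $\ell_\infty$-norm to the $\ell_2$-norm. The only difference is that you spell out the final norm inequality, which the paper leaves implicit with ``the bound follows.''
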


\begin{proof} By Cramer's rule, the $j$-th entry of $B^{-1}A_k$ is given by $\frac{\det(B_j)}{\det(B)}$ for any $j \in \{1, \ldots, m\}$, where $B_j$ is the matrix obtained by replacing the $j$-th column of $B$ by $A_k$. Since $A_k$ is also a column of $A$, $B_j$ is an $m\times m$ submatrix of $A$. Thus, $|\frac{\det(B_j)}{\det(B)}| \leq \frac{\Delta}{\lambda}$. The bound follows.
\end{proof}

\begin{cor}
\label{steepest_edge_lambda}
Using steepest edge algorithm to solve Problem (\ref{eq:LP}) would generate at most
\begin{equation}
    n\left[ m\sqrt{2m}\frac{\gamma^2\Delta}{\delta^2\lambda}\log\left(m\frac{\gamma}{\delta}\right)\right]
\end{equation}
different BFS.
\end{cor}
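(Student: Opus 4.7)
The plan is to deduce Corollary~\ref{steepest_edge_lambda} from Theorem~\ref{theorem_steepest_edge} by replacing the ratio $\nu/\mu$ (of maximum to minimum edge lengths) with an explicit bound in terms of $\gamma$, $\delta$, $\Delta$, and $\lambda$. Since an edge between two adjacent BFS $x$ and $\bar x = x + \theta\eta_N^q$ has Euclidean length $\theta \cdot \|\eta_N^q\|_2$, I would bound $\theta$ and $\|\eta_N^q\|_2$ separately.

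For $\|\eta_N^q\|_2$: the structure of $\eta_N^q$, with a $-B^{-1}A_q$ block on top and a single $1$ in the identity block at the bottom, immediately gives $\|\eta_N^q\|_2^2 = \|B^{-1}A_q\|_2^2 + 1$. Lemma~\ref{subdeterminant_bound} yields $\|B^{-1}A_q\|_2 \leq \sqrt{m}\,\Delta/\lambda$, and since $\Delta \geq \lambda$ the additive $+1$ can be absorbed into a factor of $2$, producing $\|\eta_N^q\|_2 \leq \sqrt{2m}\,\Delta/\lambda$. Meanwhile, the identity block forces $\|\eta_N^q\|_2 \geq 1$. For $\theta$: since it is precisely the new value of the entering variable in $\bar x$, $\theta$ is a positive coordinate of a BFS, so $\delta \leq \theta \leq \gamma$ whenever the pivot is non-degenerate. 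Combining, $\nu \leq \gamma\sqrt{2m}\,\Delta/\lambda$ and $\mu \geq \delta$, hence $\nu/\mu \leq (\gamma/\delta)\sqrt{2m}\,\Delta/\lambda$.

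Plugging this into Theorem~\ref{theorem_steepest_edge} turns its bound $n\lceil (m\gamma\nu)/(\delta\mu)\cdot \log(m\gamma/\delta)\rceil$ into at most $n\lceil m\sqrt{2m}\cdot (\gamma^2\Delta)/(\delta^2\lambda)\cdot \log(m\gamma/\delta)\rceil$, which is exactly the desired estimate. The main subtlety I anticipate is interpreting ``edge'' in the definitions of $\nu$ and $\mu$ correctly: it denotes the actual segment between two adjacent BFS, not merely the direction vector $\eta_N^q$. This is precisely what brings the step length $\theta$ into the estimate, and it is what produces the squared ratio $\gamma^2/\delta^2$ rather than the single factor $\gamma/\delta$ that a naive direction-only analysis would predict.
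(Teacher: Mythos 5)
Your proof is correct and follows essentially the same route as the paper: decompose the edge vector into its basic part $-B^{-1}A_{\hat q}$ plus the unit entry from the entering variable so that the edge has length $\theta\sqrt{1+\|B^{-1}A_{\hat q}\|_2^2}$, apply Lemma~\ref{subdeterminant_bound} and the bounds $\delta\le\theta\le\gamma$ to conclude $\nu\le\gamma\sqrt{1+m(\Delta/\lambda)^2}\le\gamma\sqrt{2m}\,\Delta/\lambda$ and $\mu\ge\delta$, and substitute into Theorem~\ref{theorem_steepest_edge}. Your remark identifying $\theta$ with the new positive coordinate of the entering variable, and your observation that this is the source of the $\gamma^2/\delta^2$ rather than a single $\gamma/\delta$, match the paper's reasoning exactly.
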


\begin{proof}
Let $x_B$ be the vertex corresponding to a basis $B$, and a neighbor $\tilde x$. Denote by $\hat q$ the entering variable to get from $x_B$ to $\tilde x$. Then $\tilde x-x_B = -\tilde x_{\hat q}A_B^{-1} A_{\hat q}$ where $A_{\hat q}$ is the $\hat q$-th column of $A$ and $A_B$ is the $m\times m$ submatrix of $A$ of columns in the basis $B$. Then,
\begin{equation*}
    \|\tilde x - x_B\|_2=\tilde x_{\hat q} \sqrt{1+\|A_B^{-1}A_{\hat q}\|_2^2}.
\end{equation*}
By Lemma \ref{subdeterminant_bound}, $\nu\leq \gamma \sqrt{1+m\left(\frac{\Delta}{\lambda}\right)^2}$ and $\mu\geq \delta$. The proof follows from the bound given in Theorem \ref{theorem_steepest_edge}.
\end{proof}

When the matrix $A$ is totally unimodular, Corollary \ref{steepest_edge_lambda} gives a bound for the number of different BFS of $
    n\left[ m\sqrt{2m}\frac{\gamma^2}{\delta^2}\log\left(m\frac{\gamma}{\delta}\right)\right]$
for the steepest edge rule. Remark that in this case we get a very similar bound to that given by Tano, Miyashiro and Kitahara \cite{Tano}. In their paper they show that the number of different BFS with the generalized $p$-norm steepest edge rule is bounded by
\begin{equation*}
    (n-m)\left[ m^{1+1/p}\frac{\gamma^2}{\delta^2}\log\left(m\frac{\gamma}{\delta}\right)\right].
\end{equation*}
In addition, when $b$ is integral, Kitahara and Mizuno \cite{KitaharaMizuno2013} derived from their result the bound $n[m \|b\|_1\log(m \|b\|_1)]$ on the number of different BFS generated by the simplex method with Dantzig's rule or the greatest improvement rule. Here we improve this result for different polytopes of interest and give the corresponding explicit polynomial bounds.

\begin{cor}
\label{bound_tp}
	Using Dantzig's pivot rule or greatest improvement pivot rule to solve a transportation problem written as $Ax=b$, $x\geq 0$ generates at most
	
	\begin{equation}
	n\left[ \|b\|_1 \log\left(m \|b\|_\infty\right)\right]
	\end{equation}
	different BFS and more precisely at most $n[ S \log(m \|b\|_\infty)]$ different BFS where $S$ is the total supply, equal to the total demand in the transportation problem.
\end{cor}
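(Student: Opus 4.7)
The plan is to specialize the Kitahara--Mizuno analysis to transportation polytopes: keep the overall skeleton of Lemmas~\ref{inequality_delta} and \ref{lemma2kitahara} intact, but replace the crude bound $m\gamma$ on the coordinate-sum of the optimum by the exact value $S$ that holds identically on every feasible point of a transportation polytope. This is what upgrades the Kitahara--Mizuno bound into $n\lceil S\log(m\|b\|_\infty)\rceil$.

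First I collect the transportation-specific inputs. The constraint matrix of a transportation problem is totally unimodular, so with integer $b$ every BFS has integer coordinates and $\delta \geq 1$. Each coordinate $x_{ij}$ of a feasible point is at most $\min(s_i,d_j) \leq \|b\|_\infty$, giving $\gamma \leq \|b\|_\infty$. Most crucially, summing the supply (or demand) equations shows $\sum_{i,j} x_{ij} = S$ for \emph{every} feasible $x$; in particular for the optimum $x^*$.

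I would then rewrite the analogue of Lemma~\ref{inequality_delta} for Dantzig's rule, where $\Lambda^{(t)} := -\min_q \bar c_{N^{(t)}}^q \geq 0$. The standard identity $z^* = c^T x^{(t)} + \bar c_{N^{(t)}}^T x^*_{N^{(t)}}$ combined with $\bar c_{N^{(t)}}^q \geq -\Lambda^{(t)}$ gives $z^* \geq c^T x^{(t)} - \Lambda^{(t)} \mathbf{1}^T x^*_{N^{(t)}} \geq c^T x^{(t)} - \Lambda^{(t)} S$, where in the second inequality I use $\mathbf{1}^T x^*_{N^{(t)}} \leq \mathbf{1}^T x^* = S$ instead of the weaker bound $m\gamma$ used by Kitahara--Mizuno. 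The calculation of Theorem~\ref{steepest_edge_algo} then yields a per-step gap-decrease rate of $1 - \delta/S$, and since greatest improvement dominates Dantzig step-by-step in terms of objective decrease, the same rate applies to it.

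Feeding this sharper rate into the pivot-rule-independent Lemma~\ref{lemma2kitahara} gives $x_{\bar j}^{(k)} \leq m\gamma (1 - \delta/S)^{k-t} \leq m\gamma \exp(-(k-t)\delta/S)$, so some basic coordinate drops strictly below $\delta \geq 1$ --- hence becomes and stays zero by integrality --- within $\lceil (S/\delta)\log(m\gamma/\delta)\rceil \leq \lceil S\log(m\|b\|_\infty)\rceil$ iterations. This event can fire at most once per variable, so summing over the $n$ variables gives the refined bound $n\lceil S\log(m\|b\|_\infty)\rceil$, and the first bound follows from $\|b\|_1 = 2S$. The one delicate point I expect to spend effort on is checking that the substitution $m\gamma \to S$ truly propagates through both the Dantzig and greatest-improvement derivations --- that no other hidden occurrence of $m\gamma$ in the chain of inequalities forces a weaker constant back in --- but beyond that single substitution the argument is verbatim Kitahara--Mizuno.
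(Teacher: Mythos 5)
Your proposal is correct and follows essentially the same route as the paper: replace the crude Kitahara--Mizuno bound $m\gamma$ on $\|x^*_{N^{(t)}}\|_1$ by the exact coordinate-sum $S$ available on any feasible point of a transportation polytope, observe that total unimodularity plus integer $b$ gives $\delta\geq 1$ and $\gamma\leq\|b\|_\infty$, obtain the contraction factor $1-\delta/S$, and push it through Lemma~\ref{lemma2kitahara} to count at most $\lceil S\log(m\|b\|_\infty)\rceil$ iterations per variable. Your worry about ``hidden'' occurrences of $m\gamma$ is unfounded in this instance: the only remaining $m$ is the one inside Lemma~\ref{lemma2kitahara}, which lands harmlessly inside the logarithm, exactly as in the paper's computation.
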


\begin{proof}
	We slightly change the proof of the result given by Kitahara and Mizuno \cite{KitaharaMizuno2013}.
	\begin{eqnarray*}
		z^*&=&c^Tx^*\\
		&=&c^Tx^{(t)}+\bar c_{ N^{(t)}}^Tx_{ N^{(t)}}^* \\ 
		& \geq & c^Tx^{(t)}-\Delta^{(t)} \|x_{ N^{(t)}}^*\|_1
	\end{eqnarray*}
	where $\Delta^{(t)} = - \min \bar c_N^q$. If $x_{i,j}$ is the value for the edge from supply node $i$ to demand node $j$, $\|x_{ N^{(t)}}^*\|_1\leq \|x^*\|_1 \leq \sum_{i,j} x^*_{i,j}=S$ the total supply (or total demand).
	Similarly to the proof of Theorem \ref{steepest_edge_algo}, we use the above inequality to find
	\begin{eqnarray*}
		c^Tx^{(t)}-c^Tx^{(t+1)}&=&\Delta^{(t)} x_{\hat{q}^{(t)}}^{(t+1)} \\
		&\geq & \Delta^{(t)}\delta\\
		& \geq & \frac{\delta}{S} (c^Tx^{(t)}-z^*).
	\end{eqnarray*}
	
Therefore $c^T x^{(t+1)}-z^* \leq \left( 1-\frac{\delta}{S}\right) (c^T x^{(t)}-z^*)$. Using Lemma \ref{lemma2kitahara}, we get 
\begin{equation*}
x_{\bar{j}}^{(t+k)}\leq m\left(1-\frac{\delta}{S}\right)^k x_{\bar{j}}^{(t)} \leq m\gamma\left(1-\frac{\delta}{S}\right)^k \leq m\gamma e^{-\frac{k\delta}{S}}.
\end{equation*}
The number of different BFS is then at most $n[\frac{S}{\delta} \log(m\frac{\gamma}{\delta})]$. As noted in \cite{KitaharaMizuno2013}, since $A$ is a totally unimodular matrix, $\delta$ is a positive integer, so $\delta\geq 1$. Denote by $s_i$ and $v_j$ the supply and demand at supply node $i$ and demand node $j$ respectively. Then $\gamma = \max x_{i,j} \leq \min (\max_i s_i,\max_j d_j) \leq \|b\|_\infty$. The proof follows.
\end{proof}

Note that the proof of Corollary \ref{bound_tp} gives the bound $n\left[\frac{\|x^*\|_1}{\delta} \log\left(m\frac{\gamma}{\delta}\right)\right]$ on the number of different BFS generated for Dantzig and greatest improvement pivot rules and a similar bound for the steepest edge pivot rule: $n\left[\frac{\|x^*\|_1 \nu}{\delta\mu} \log\left(m\frac{\gamma}{\delta}\right)\right]$. We are now ready to use the above results to prove our bounds on several combinatorial polytopes.

\begin{proof}[Proof of Theorem \ref{pivot_diameter_bland} and Theorem \ref{pivot_diameter_steepest}]
We prove the two theorems in parallel, as we only need to
apply two different estimations to the same polytope for each item of the same index.
\begin{enumerate}[a)]
\item 

The fractional perfect matching polytope is a $0/\frac{1}{2}/1$ polytope so $\gamma=1$ and $\delta=1/2$. Furthermore, $x\in FPM$ is a vertex if and only if it is the union of a perfect matching $\mathcal{M}_x$ given by the edges $\{e\in E,x_e=1\}$ and a collection $\mathcal{C}_x$ of disjoint cycles of odd length given by the edges $\{e\in E,x_e=1/2\}$. Then $\|x\|_1=\frac{k_1+k_2}{2}$ where $k_1$ is the number of nodes in the odd length cycles and $k_2$ the number of nodes in the matching $\mathcal{M}_x$. Therefore $\|x^*\|_1= \frac{|V|}{2}$. Now let us give bounds for $\mu$ and $\nu$. For two vertices $x_1$ and $x_2$ and any edge $e\in E$, $|(x_1-x_2)_e|\leq 1$. Then, $\|x_1-x_2\|_2^2 \leq \|x_1-x_2\|_1\leq |V|$ so $\nu\leq \sqrt{|V|}$. Furthermore $\mu \geq \delta=1/2$.

\item 

The fractional matching polytope is still a half integral polytope so $\gamma=1$ and $\delta=1/2$. Vertices are still the union of a perfect matching on $\mathcal{M}_x$ given by the edges $\{e\in E,x_e=1\}$ and disjoint odd-length cycles $\mathcal{C}_x$ given by the edges $\{e\in E,x_e=1/2\}$. We have to add the $n$ slack variables $s_i$ for the inequality at each node so $\|x\|_1=|\mathcal{M}_x| /2+|\mathcal{C}_x|/2+|V-(\mathcal{M}_x \cup \mathcal{C}_x)|$ where the last term comes from the slack variables. Then, $\|x^*\|_1\leq |V|$. The same arguments as above give $\mu\geq 1/ 2$ and $\nu\leq \sqrt{2|V|}$.

The next polytopes are $0/1$ polytopes, therefore $\gamma=\delta=1$.

\item The Birkhoff polytope has exactly $n$ positive edges then $\|x\|_1=n$ for any permutation $x$. Two vertices $x,y$ are adjacent on this polytope if the symmetric difference of their edges form a single alternating cycle of norm $\sqrt{l}$ where $l$ is its length. Because the cycle is alternating, we have $4\leq l\leq 2n$ and then $\mu = 2$, $\nu=\sqrt{2n}$.

\item 
For the shortest path polytope, there are $n^2-3n+3$ variables and $n-2$ slack variables for each node of indices $2$ to $n$. A path of length $l$ is represented by a vertex $x$ where the positive slack variables are the variables for the nodes which are not visited by the path. Then $\|x\|_1 = l+ (n-1-l) = n-1$. Two paths are adjacent if the union of their edges contains a unique cycle. The norm of the corresponding direction is at least $\sqrt {l'}$ where $l'$ is the length of this cycle and at most $\sqrt{2l'}$ where we consider the $l'$ possibly affected slack variables. Therefore $\mu\geq\sqrt 3$ and $\nu\leq\sqrt {2n}$.
\end{enumerate}
\end{proof}

\section{Monotone paths on Transportation polytopes}
\label{transportation_polytopes}

Exponentially-long simplex paths can be found even for very simple linear programs given by network flow problems using
Dantzig's pivot rule \cite{zadeh}. Nevertheless, Orlin showed that for certain pivot rules, the network Simplex method runs in a polynomial number of pivots \cite{orlin97}. Here we try to look at the case of transportation polytopes. 

In the paper \cite{borgwardt2018}, Borgwardt, De Loera and Finhold proved that the diameter of $m\times n$ transportation polytopes is bounded by the Hirsch bound $m+n-1$. In this section we study the monotone diameter of this polytope. From any degenerate transportation we can derive a non-degenerate transportation polytope with greater or equal monotone diameter by perturbing the original polytope. We will therefore assume non-degeneracy in this section. Recall that for a non-degenerate transportation polytope $P$, $x\in P$ is a vertex if and only if its support forms a spanning tree on the bipartite graph $K_{m,n}$ given by the $m$ supply nodes and the $n$ demand nodes (see references in \cite{borgwardt2018}). For a vertex $x$ we will write $s\sim d$ when supply node $s$ and demand node $d$ are adjacent in the support graph of $x$.

\begin{lemma}
\label{type_edges}
Let $x^*$ be the optimum of a $n\times m$ transportation polytope for a given linear functional $c$. Denote by $c_{v,w}$ the cost of the edge between vertex $v$ and $w$. Let $s_1,s_2,\ldots,s_k$ be $k\geq 2$ supply nodes and $d_1,d_2,\ldots,d_k$ demand nodes. If $s_1\sim d_1,s_2\sim d_2,\ldots, s_k\sim d_k$ in $x^*$ then $c_{s_1,d_1} - c_{d_1,s_2} + c_{s_2,d_2} - c_{d_2,s_3} +\ldots+c_{s_k,d_k} - c_{d_k,s_1} < 0$.

Therefore, an edge between two vertices of the transportation polytope following the cycle $s_1 d_1 s_2 d_2 \ldots s_k d_k$ is an improving edge for the linear functional.
\end{lemma}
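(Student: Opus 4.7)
The plan is to exploit the optimality of $x^*$ by constructing a feasible perturbation supported on the cycle $s_1 d_1 \ldots s_k d_k$ and reading off the cost change. Define a vector $\Delta$, indexed by the edges of the bipartite graph, by $\Delta_{s_i,d_i} = -1$ and $\Delta_{s_{i+1},d_i} = +1$ for $i=1,\ldots,k$ (indices taken cyclically, so $s_{k+1} := s_1$), and $\Delta_e = 0$ on every other edge. At each supply node $s_i$ the coordinate sum of $\Delta$ is $-1+1 = 0$, and likewise at each demand node, so $\Delta$ lies in the kernel of the supply/demand constraint matrix and $x^* + \epsilon\Delta$ satisfies the equality constraints for every $\epsilon \in \R$.

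Next I would verify one-sided feasibility and invoke optimality. The only coordinates that decrease along the ray $\epsilon \mapsto x^* + \epsilon\Delta$ are the $x^*_{s_i,d_i}$, and these are strictly positive because $s_i\sim d_i$ in the support spanning tree of the non-degenerate vertex $x^*$. Consequently $x(\epsilon) := x^* + \epsilon\Delta$ is a feasible point of the transportation polytope for every $0 < \epsilon \leq \min_i x^*_{s_i,d_i}$, and $x(\epsilon) \neq x^*$ since $\Delta \neq 0$. Because $c$ is nondegenerate, $x^*$ is the \emph{unique} optimum, so $c^T x(\epsilon) > c^T x^*$; dividing by $\epsilon$ yields
\begin{equation*}
c^T\Delta \;=\; \sum_{i=1}^k c_{s_{i+1},d_i} \;-\; \sum_{i=1}^k c_{s_i,d_i} \;>\; 0.
\end{equation*}
Since edges are unordered, $c_{s_{i+1},d_i} = c_{d_i,s_{i+1}}$, and the displayed inequality is exactly the claim of the lemma.

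For the concluding sentence, recall that adjacent vertices of a transportation polytope differ by moving $\lambda > 0$ units of flow along an alternating sign cycle in the bipartite graph. The edge associated with the cycle $s_1 d_1 \ldots s_k d_k$ whose direction inserts the $(s_i,d_i)$ entries and removes the $(s_{i+1},d_i)$ entries is a displacement of the form $\lambda\cdot(-\Delta)$, and its cost change equals $-\lambda\,c^T\Delta < 0$ by the inequality just proved; hence it is an improving edge. The one subtle step in the whole argument is getting the \emph{strict} inequality, which is exactly where the nondegeneracy of $c$ (uniqueness of $x^*$) is essential; without it, the perturbation argument yields only $c^T\Delta \geq 0$.
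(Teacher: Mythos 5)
Your proof is correct, and it takes a genuinely different route from the paper's. The paper argues via reduced costs: for each $i$ it inserts the non-basic edge $(s_{i+1},d_i)$ into the spanning tree of $x^*$, obtains a positive reduced-cost inequality $\tilde C_{s_{i+1},d_i}>0$ along the corresponding tree cycle, and then shows by an induction on $k$ (peeling off a leaf of the subtree spanned by the $(s_i,d_i)$) that these $k$ inequalities telescope, all tree-path terms cancelling, leaving exactly $-\bigl(c_{s_1,d_1}-c_{d_1,s_2}+\dots-c_{d_k,s_1}\bigr)>0$. You instead build the circulation $\Delta$ directly supported on the cycle $s_1d_1\dots s_kd_k$, verify it lies in the kernel of the constraint matrix, observe that $x^*+\epsilon\Delta$ remains feasible for small $\epsilon>0$ because the decreasing coordinates are precisely the tree edges $x^*_{s_i,d_i}>0$, and read off $c^T\Delta>0$ from uniqueness of the optimum. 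This avoids the spanning-tree path bookkeeping and the cancellation induction entirely; it is shorter and more transparent. What the paper's version buys is an explicit expression of the cycle cost as a sum of $k$ reduced costs with respect to the optimal basis, which makes the connection to simplex pivots visible, but for the lemma as stated your argument is cleaner. One point both proofs rely on, and which you correctly flag: the strict inequality requires nondegeneracy of $c$ (uniqueness of $x^*$), which the paper's framework assumes throughout when discussing $G(A,b,c)$; without it you only get $c^T\Delta\ge 0$ and, correspondingly, the paper's reduced costs are only $\ge 0$. Implicitly both arguments also need the $s_i$ to be distinct and the $d_i$ to be distinct, which is what makes $\Delta$ a genuine alternating cycle and makes your flow-balance check at each node valid; this is clear from how the lemma is invoked (always with $k=2$) but worth stating.
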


\begin{proof}
Let $s$ and $d$ be respectively a supply and demand node which are not adjacent in $x^*$. Let $s=x^0,x^1,x^2,\ldots,x^l=d$ be the path from $s$ to $d$ in $x^*$. By optimality of $x^*$, entering the edge $(s,d)$ into the spanning tree associated to $x^*$ will increase the cost function. In other words, the reduced cost of the variable $(s,d)$ is positive i.e., $\tilde C_{s,d}:= c_{s,d}-c_{x^0,x^1}+c_{x^1 x^2}-\ldots+c_{x^{l-2},x^{l-1}} -c_{x^{l-1},x^l} >0$, which gives us an inequality on the alternating cycle $s=x^0, x^1, x^2, \ldots, x^l=d$.

We will add $k$ inequalities of this type to obtain the desired inequality. More precisely, we will add the inequality resulting from the cycle given by adding the edge $(s_2,d_1)$ to $x^*$, the cycle given by the edge $(s_3,d_2)$, etc... and the cycle given by $(s_1,d_k)$. We prove by induction on $k$ that in the resulting sum $\tilde C_{s_2,d_1}+\tilde C_{s_3, d_2}+\ldots+\tilde C_{s_1,d_k}$, terms cancel out to leave out $-(c_{s_1,d_1} - c_{d_1,s_2} + c_{s_2,d_2} - c_{d_2,s_3} +\ldots+c_{s_k,d_k} - c_{d_k,s_1})$, which will then be positive.

Denote by $T$ the smallest subtree of the support spanning tree of $x^*$ containing the edges $(s_1,d_1),(s_2,d_2),\ldots,(s_k,d_k)$. Without loss of generality, assume $(s_1,d_1)$ is a leaf in $T$. We are going to merge together $\tilde C_{s_2, d_1}$ and $\tilde C_{s_1, d_k}$. The term $-c_{s_1,d_1}$ appears exactly once in their sum, say in $\tilde C_{s_1, d_k}$. We can therefore write the two paths in $x^*$ from $d_1$ to $s_2$ and $s_1$ to $d_k$ by $d_1 v^1 v^2\ldots v^l p^1 p^2 \ldots p^{r-1} p^r=s_2$ and $s_1 d_1 v^1 v^2 \ldots v^l q^1 q^2 \ldots q^{t-1} q^t=d_k$ where $p^1\neq q^1$. Note that the path in $x^*$ from $d_k$ to $s_2$ is exactly $q^t q^{t-1} \ldots q^1 v^l p^1 p^2\ldots p^r$. Then the terms from the path $d_1v^1v^2\ldots v^l$ cancel to give $\tilde C_{s_1,q^t}+\tilde C_{d_1,p^r}=c_{s_2,d_1}+c_{s_1,d_k} - c_{s_1,d_1} - c_{s_2,d_k} +\tilde C_{s_2, d_k}$. 

If $k=2$, the above calculations directly give the desired result $\tilde C_{s_2\sim d_1}+\tilde C_{s_1\sim d_2} = c_{s_2,d_1}+c_{s_1,d_2} - c_{s_1,d_1} - c_{s_2,d_2}$. Otherwise, we use the induction on $\tilde C_{s_3\sim d_2}+\tilde C_{s_4\sim d_3}+\ldots+\tilde C_{s_2\sim d_k}$ and the result follows.

\end{proof}


We now consider the case of a $2 \times n$ transportation polytope. We denote the supply and demand nodes respectively by $s_1,s_2$ and $d_1,\ldots,d_n$. Consider a vertex of the $2\times n$ transportation polytope. Assuming that the transportation polytope is non-degenerate, we can partition the demand nodes in the following way:
\begin{itemize}
\item the set $D_1$ of demand nodes that are leaves adjacent to supply node $s_1$ only.
\item the set $D_2$ of demand nodes that are leaves adjacent to supply node $s_2$ only.
\item the last demand node adjacent to $s_1$ and $s_2$.
\end{itemize}

\begin{proof}[Proof of Theorem \ref{monodiam_tp_polytope}]
We will show that from any vertex we can get to the optimum $x^*$ in at most $n$ steps using only edges of the type given by Lemma  \ref{type_edges}.

Without loss of generality, assume $d_1$ is adjacent to the two supply nodes in $x^*$, $D_1=\{2,\ldots,k\}$ and $D_1=\{k+1,\ldots,n\}$. We work by induction on $n\geq 1$. The result is true for $n=1$ and the monotone diameter is even $0=n-1$ so now assume $n>1$.
Let $x$ be the initial vertex of the transportation polytope. If any node $d\in D_1$ is a leaf incident to $s_1$ in $x$, likewise in $x^*$, we may remove this node and set the supply of $s_1$ to $S-D$ where $S$ and $D$ are respectively the supply at $s_1$, and the demand at $d$. The new problem is non-degenerate with $n-1$ demand nodes so the induction gives the desired result. The result similarly holds if a node in $D_2$ is a leaf adjacent to supply node $2$.

We therefore assume that all nodes in $D_1$ are adjacent to supply node $2$ and all nodes in $D_1$ are adjacent to supply node $1$ in $x$. Let $d$ the demand node adjacent to both supply nodes in $x$.\\

Case 1: $d\neq d_1$\\
We are in fact going to prove that only $n-1$ steps are necessary to get to the optimum.

If $D_1$ and $D_2$ are not empty (see Figure \ref{twotimesn}b)), without loss of generality, assume $d\in D_1$ and let $\tilde d \in D_2$. We make the edge $(s_2,\tilde d)$ enter the basis. The corresponding cycle in $x$ is $s_2 d s_1 \tilde d$ with $(s_2 \tilde d)$ and $(s_1,d)$ being two edges present in the optimum $x^*$. By Lemma \ref{type_edges}, this pivot reduces the cost function. Denote by $x^2$ the resulting vertex. The demand node of the edge which has been deleted, either $(s_2,d)$ or $(s_1,\tilde d)$ is now a leaf in $x^2$ adjacent to the same supply node as in $x^*$. Similarly to above, we can delete this demand node and we get the result by induction.

\begin{figure}

\centering
\includegraphics[height=4cm]{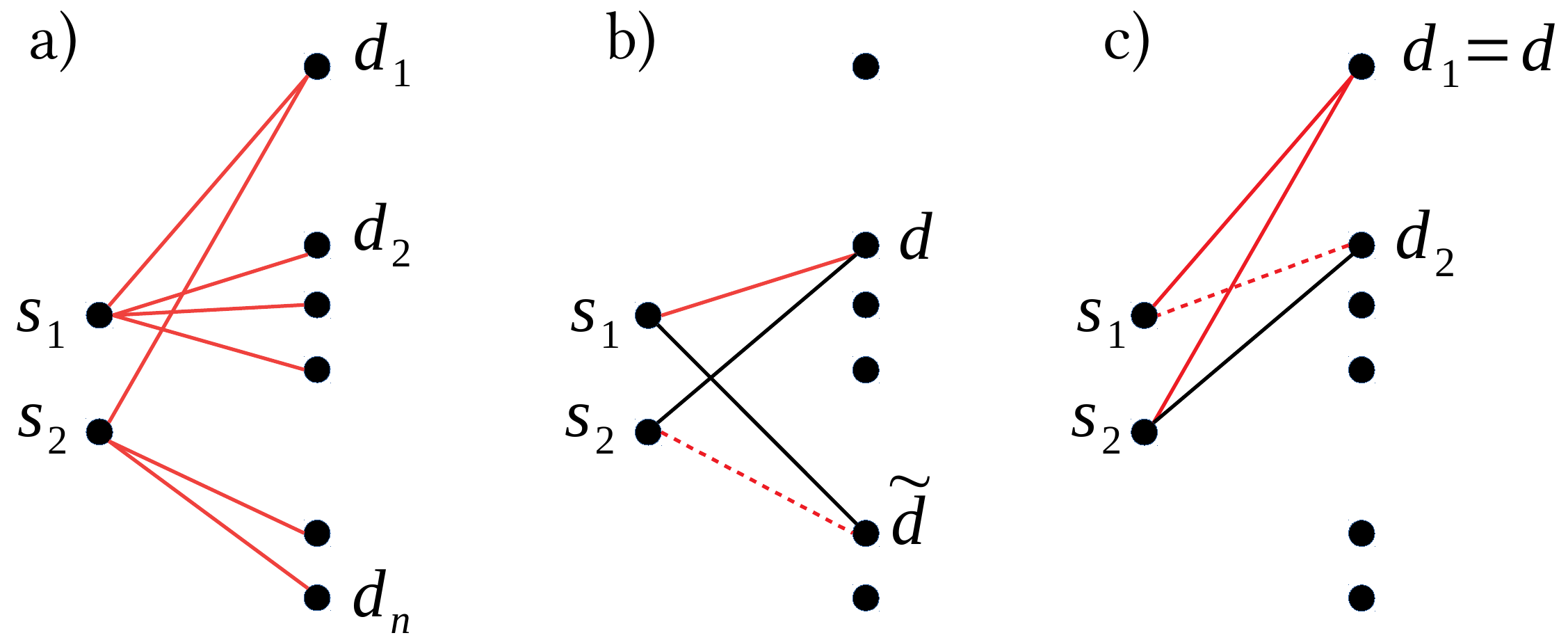}
\caption{Illustration of the choice of entering variable in dashed lines when $D_1$ and $D_2$ non empty. Edges belonging to the optimum tree a) are in red.}
\label{twotimesn}
\end{figure}

Otherwise, without loss of generality we can assume $D_2$ empty and $D_1=\{2,\ldots,n\}$ (see Figure \ref{D2null}). $s_2$ is a leaf adjacent to $d_1$ in $x^*$ so the demand at $d_1$ is greater to the supply at $s_2$. Then, in an admissible tree, $d_1$ cannot be a leaf adjacent to $s_2$.
Since $d\neq d_1$, $d_1$ is a leaf and it has to be adjacent to $s_1$ in $x$. We make the variable $(s_2,d_1)$ enter the basis. The corresponding cycle is $s_2 d s_1 d_1$ and $(s_1,d)$ and $(s_2,d)$ are present edges in the optimum $x^*$. By Lemma \ref{type_edges} this pivot is increasing. Denote by $x^2$ the new spanning tree. The potential leaving variables are only $(s_1,d_1)$ and $(s_2,d)$ but it cannot be $(s_1,d_1)$ otherwise $d_1$ would be a leaf adjacent to $s_2$ in $x^2$. Therefore, $(s_2,d)$ has been deleted and $d$ is now a leaf adjacent to the correct supply node in $x^2$ so we can delete the demand leaf $d$.

In $x^2$, $d_1$ is now adjacent to both supply nodes and all other demand nodes are adjacent to $s_2$. We enter the variable $(s_1,d_2)$ into the basis. The corresponding cycle $s_1 d_1 s_2 d_2$ is improving since $(s_1,d_2)$ and $(s_2,d_1)$ are in $x^*$. Similarly to above, $(s_1,d_1)$ cannot be the leaving variable otherwise $d_1$ would become a leaf adjacent to $s_2$. Therefore, in the new spanning tree $x^2$, $d_2$ is a leaf adjacent to the correct supply node so we can delete it. 

\begin{figure}

\centering
\includegraphics[height=3cm]{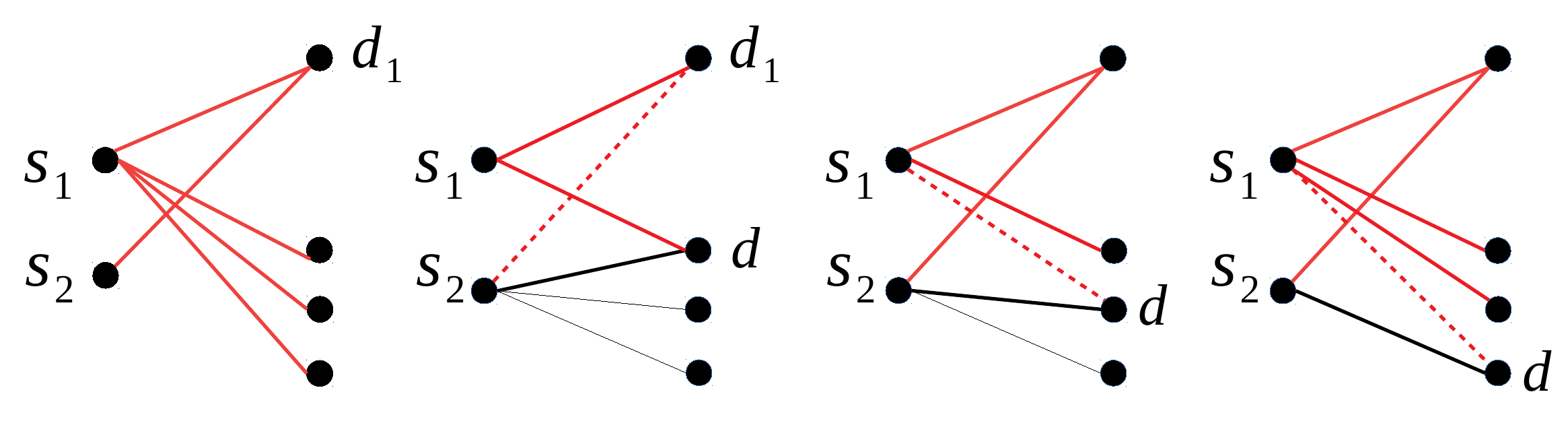}
\caption{Illustration of the choice of entering variable in dashed lines when $D_2$ null. Edges belonging to the optimum tree on the left are in red.}
\label{D2null}
\end{figure}

Note that in all pivot steps considered here we delete a demand node. In the new spanning tree, either $d_1$ is a leaf or $D_1$ or $D_2$ are null which are the cases we handled. The induction therefore holds and we can get to $n'=1$ in at most $n-1$ steps. For $n'=1$ there is only one spanning tree which is the optimum.

Case 2: $d=d_1$\\
We have already considered the case where $D_1$ or $D_2$ are empty so now assume this is not the case. Therefore $d_2\in D_1$ and $d_2$ is a leaf adjacent to $s_2$ in $x$ (see Figure \ref{twotimesn}c).

We make the edge $(s_1,d_2)$ enter the basis. The corresponding cycle is $s_1 d_1 s_2 d_2$. This is an improving cycle according to Lemma \ref{type_edges} given that edges $(s_1,d_2)$ and $(s_2,d_1)$  are present in $x^*$. Denote by $x^2$ the new vertex of the polytope. Either edge $(s_1, d_1)$ or $(s_2,d_2)$ has been removed. If $(s_2,d_2)$ was removed, $d_2$ is a leaf in $x^2$ adjacent to $s$ in $x^2$, likewise in $x^*$. Removing node $d_2$ therefore gives the result by induction. Otherwise, $(s_1, d_1)$ has been removed so in $x^2$, the demand node adjacent to both supply nodes is now $d_2\neq d_1$ and we use case 1.

We proved that the monotone diameter is $\leq n$. The bound $n$ can be attained potentially if there exists at least one vertex with $d=d_1$ and $D_1,D_2$ non empty. This can only happen if $n\geq 3$, otherwise the monotone diameter is $n-1$.
\end{proof}

\begin{conj}
The monotone diameter of $m\times n$  transportation polytopes is linear in $m$ and $n$.
\end{conj}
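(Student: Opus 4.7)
The plan is to generalize the inductive argument from the $2 \times n$ case to arbitrary $m \times n$ transportation polytopes, aiming at a bound of the form $O(m+n)$. As in the proof of Theorem \ref{monodiam_tp_polytope}, I would induct on $m+n$ under the non-degeneracy assumption, so that the support of every vertex is a spanning tree on $K_{m,n}$. In the inductive step, from any starting vertex whose support tree is $T$, the goal is to perform a bounded number of improving pivots after which $T$ and the optimum tree $T^*$ share a \emph{leaf}. Once such a common leaf is present, one contracts that edge (subtracting its flow from the corresponding supply and demand) to obtain a smaller non-degenerate transportation polytope whose monotone diameter is controlled by induction.

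The main workhorse would be Lemma \ref{type_edges}: an entering variable $(s,d)$ is guaranteed to give an improving pivot whenever the fundamental cycle it creates in $T$ alternates between non-tree edges and edges of $T^*$. The concrete subroutine I would try is: pick a leaf $\ell$ of $T^*$, say a demand node with parent $s^*$, and look at the unique path $P$ in $T$ from $\ell$ to $s^*$. If $P$ is just the edge $(\ell,s^*)$ and $\ell$ is a leaf of $T$, we are done; if every second edge of $P$ already lies in $T^*$, then pivoting in $(\ell,s^*)$ is improving and will detach $\ell$ from all but $s^*$. In the general case, one needs a short preparatory sequence of pivots (analogous to the "Case 1 / Case 2" analysis of the $2\times n$ proof) that re-routes $P$ so that the alternation condition of Lemma \ref{type_edges} is met before the decisive pivot is performed.

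The main obstacle I expect is bounding this preparatory work. In the $2\times n$ case the path $P$ has length at most $3$, so one or two extra pivots always suffice; but for general $m\times n$ the path $P$ inside $T$ can be long and its interaction with $T^*$ can be intricate. A robust argument probably requires an amortized / potential-function analysis: define a potential measuring, for instance, the size of the symmetric difference of the edge sets of $T$ and $T^*$, and show that each improving pivot decreases this potential by at least one in amortized terms, so that the total pivot count is $O(m+n)$ rather than merely $O(mn)$. An alternative route is to apply Theorem \ref{mono_diam_zonotope} combined with a refined count of edge directions actually used by monotone paths (not merely available), since the raw edge-direction count from the proof of Theorem \ref{cor-TPsfixedk} is already polynomial but not linear.

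Finally, I would verify that the base cases (e.g., $m=2$, already proved; and small $n$) and the degenerate perturbation argument at the start of the section are compatible with the inductive step, so that the linear bound transfers to all $m\times n$ transportation polytopes. If the amortized potential argument succeeds, the resulting statement would be stronger than the conjecture, giving not just linearity but a clean bound of the form $\text{mono-diam}(P) \leq c(m+n)$ for some small constant $c$, consistent with the Hirsch-type bound $m+n-1$ proved for the undirected diameter in \cite{borgwardt2018}.
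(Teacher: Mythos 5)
The statement you are attacking is a \emph{conjecture} in the paper: the authors leave it open, having proved only the $2\times n$ case (Theorem~\ref{monodiam_tp_polytope}). So there is no proof in the paper to compare against, and your text is, by your own framing, a research plan rather than a proof. You flag the gap yourself (``the main obstacle I expect is bounding this preparatory work \ldots a robust argument probably requires an amortized / potential-function analysis \ldots if the amortized potential argument succeeds''), and that gap is precisely where the difficulty lies.

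The concrete problem with the potential function you suggest, $|T\triangle T^*|$, is that a pivot that enters a $T^*$-edge may be forced to \emph{leave} a $T^*$-edge. You do not get to choose the leaving variable: it is fixed by the ratio test once the entering edge is chosen. When the leaving variable is itself in $T^*$, the symmetric difference does not decrease, and you have made no progress toward the amortized bound. In the $2\times n$ proof this is avoided only through ad hoc structural arguments special to two supply nodes (``$(s_1,d_1)$ cannot be the leaving variable otherwise $d_1$ would become a leaf adjacent to $s_2$''); nothing analogous is available in general, and your leaf-contraction subroutine also gives no control over what leaves. Your alternative route via Theorem~\ref{mono_diam_zonotope} bounds the monotone diameter by the number of edge directions, but the count in the proof of Theorem~\ref{cor-TPsfixedk} is $\sim e\cdot k!\,n^k$, which is exponential in $k=m$, nowhere near linear; ``a refined count of edge directions actually used'' would itself be the whole theorem, not a shortcut to it. In short, the inductive skeleton is the natural one, Lemma~\ref{type_edges} is the right tool, but the decisive step --- showing the preparatory pivots are $O(1)$ per contraction, or that some potential drops by a constant per improving pivot --- is missing, and the specific mechanisms you propose do not supply it.
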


\noindent {\bf Acknowledgements:} The authors are grateful to Amitabh Basu, Laurent Poirrier, Francisco Santos, Laura Sanità, Sean Kafer and Xiaotie Chen for several remarks and comments that were useful to the creation of this manuscript. The first and second author were partially supported
by NSF grant DMS-1818969. The first author was also supported by \'Ecole Polytechnique.

\bibliographystyle{plain}
\bibliography{full-bib}

\end{document}